\renewenvironment{proof}[1][\proofname] {\par\pushQED{\qed}\normalfont\topsep6\p@\@plus6\p@\relax\trivlist\item[\hskip\labelsep\bfseries#1\@addpunct{.}]\ignorespaces}{\popQED\endtrivlist\@endpefalse}
\newtheorem{theorem}{\bf Theorem}[section]
\newtheorem{lemma}[theorem]{\bf Lemma}
\theoremstyle{definition}
\newtheorem{remark}[theorem]{\bf Remark}
\newtheorem{definition}[theorem]{\bf Definition}
\def\cG{\mathcal{G}}
\def\cC{\mathcal{C}}
\def\cP{\mathcal{P}}
\title{Regular subgraphs of linear hypergraphs}
\author{
	Oliver Janzer\thanks{Department of Mathematics, ETH, Z\"urich, Switzerland. Research supported by an ETH Z\"urich Postdoctoral Fellowship 20-1 FEL-35. Email: \textbf{oliver.janzer@math.ethz.ch}.}
	\and
	Benny Sudakov\thanks{Department of Mathematics, ETH, Z\"urich, Switzerland. Research supported in part by SNSF grant 200021\_196965. Email: \textbf{benjamin.sudakov@math.ethz.ch}.}
	\and
	Istv\'{a}n Tomon\thanks{Department of Mathematics and Mathematical Statistics, Ume\r{a} University, Sweden. Email: \textbf{istvan.tomon@umu.se}}
}
\date{}
\begin{document}

\maketitle

\sloppy

\begin{abstract}
    We prove that the maximum number of edges in a 3-uniform \emph{linear} hypergraph on $n$ vertices containing no 2-regular subhypergraph is $n^{1+o(1)}$. This resolves a conjecture of Dellamonica, Haxell, \L uczak, Mubayi, Nagle, Person, R\"odl, Schacht and Verstra\"ete. We use this result to show that the maximum number of edges in a $3$-uniform hypergraph on $n$ vertices containing no immersion of a closed surface is $n^{2+o(1)}$. Furthermore, we present results on the maximum number of edges in $k$-uniform linear hypergraphs containing no $r$-regular subhypergraph.
\end{abstract}
	
\section{Introduction}

A non-empty hypergraph is called $r$-regular if every vertex in it has degree $r$. How many edges can an $n$-vertex graph, or more generally  $k$-uniform hypergraph (or $k$-graph, for short), have without containing an $r$-regular subhypergraph? This fundamental question has attracted considerable interest in the last few decades. The simplest instance of the problem, concerning the $r=2$ case for graphs, is equivalent to determining the maximum number of edges in an $n$-vertex acyclic graph. Clearly, the answer for this question is $n-1$, the extremal example being any $n$-vertex tree. The problem immediately becomes much more difficult if we increase $k$ or $r$. The case of general $r$ for graphs was a problem raised by Erd\H os and Sauer in the 70's \cite{Erd75}. After decades of partial results, it was very recently showed by Janzer and Sudakov \cite{JS22} that the answer to this question is $\Theta_r(n\log \log n)$.

Concerning hypergraphs, Mubayi and Verstra\"ete \cite{MV09} proved that for any even $k$ and sufficiently large $n$, the maximum number of edges in an $n$-vertex $k$-graph which does not contain a $2$-regular subhypergraph is $\binom{n-1}{k-1}$, the extremal example being a star. Complementing this result, Han and Kim \cite{HK18} showed that if $k$ is odd and $n$ is sufficiently large, then the maximum number of edges in an $n$-vertex $k$-graph without a $2$-regular subhypergraph is $\binom{n-1}{k-1}+\lfloor \frac{n-1}{k} \rfloor$, and the extremal example is a star with centre $v$ together with a full matching avoiding $v$. Kim \cite{Kim16} obtained exact and asymptotic results for certain further pairs $(r,k)$ as well.

A hypergraph is \emph{linear} if any two edges intersect in at most one vertex. When extending extremal results about graphs to hypergraphs, it is usually natural to consider linear hypergraphs as they are more similar to graphs than general hypergraphs are. The above problem for linear hypergraphs was studied by Dellamonica, Haxell, \L uczak, Mubayi, Nagle, Person, R\"odl, Schacht and Verstra\"ete \cite{DHL+}. They were particularly interested in $2$-regular subhypergraphs, and more generally subhypergraphs in which all vertices have bounded even degree. The additional condition of linearity seems to make the problem more challenging, as there is no good candidate for an extremal example. Let us write $f(n)$ for the maximum possible number of edges in an $n$-vertex, linear $3$-graph which does not have a $2$-regular subhypergraph. Dellamonica et al. \cite{DHL+} proved that 
$$\Omega(n\log n)\leq f(n)\leq O(n^{3/2}(\log n)^5).$$
They conjectured that their lower bound is asymptotically correct, that is, $f(n)=n^{1+o(1)}.$ Supporting this conjecture, they proved that there is a constant $C$ such that any $n$-vertex linear 3-graph with at least $Cn(\log n)^2$ edges contains a subhypergraph in which all degrees are even and at most $12$. Our first result proves their conjecture.

\begin{theorem} \label{thm:2-reg}
Every linear $3$-graph with $n$ vertices and at least $n^{1+o(1)}$ edges 
contains a $2$-regular subhypergraph. More precisely, there is a constant $C$ such that  $$f(n)\leq n\exp(C\sqrt{(\log n)(\log \log n)}).$$
\end{theorem}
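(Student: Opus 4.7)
The plan is as follows. First, by a standard cleaning argument I pass to a subhypergraph $H'$ of $H$ in which every vertex has degree at least $d := \exp(c\sqrt{(\log n)(\log \log n)})$ for a suitable small constant $c$; this loses at most a constant factor in the edge count, so $H'$ still has $\Omega(nd)$ edges. It then suffices to locate a $2$-regular subhypergraph inside $H'$.

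Second, I would manufacture many ``closed'' configurations via an iterated branching-walk argument. Fix an initial edge $e_0$ and define a length-$k$ Berge-walk to be a sequence of edges $e_0,e_1,\dots,e_k$ together with pivots $v_i \in e_{i-1}\cap e_i$; by linearity, consecutive edges share exactly the pivot. The minimum-degree hypothesis allows at least roughly $d$ extensions at every step, giving at least $d^k$ such walks. I would then pigeonhole on a compact fingerprint — at minimum the final edge, but enriched by information such as the multiset of ``non-pivot'' vertices traversed — so that two walks with a common fingerprint can be glued into a closed Berge walk whose edge-set symmetric difference is a nonempty subhypergraph in which many vertices have even degree. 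Choosing $k \approx \sqrt{\log n/\log\log n}$ makes $d^k$ beat the number of fingerprints and yields, after optimisation, the sub-polynomial excess factor $\exp(C\sqrt{(\log n)(\log\log n)})$.

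Third, I would upgrade this even-degree configuration to a genuinely $2$-regular one. A naive symmetric difference only forces the pivot vertices to have even degree and only guarantees ``all degrees even'', not ``every degree exactly $2$''. To bridge this gap I would combine the Dellamonica et al.\ type bounded-degree result (which produces a subhypergraph of maximum degree at most $12$ from sufficiently many edges) with a peeling argument: iteratively strip off minimal $2$-regular configurations, the smallest of which is the tetrahedral $4$-edge configuration $\{1,2,3\},\{1,4,5\},\{2,4,6\},\{3,5,6\}$ on $6$ vertices. If peeling terminates prematurely, the residual structure is sparse enough to feed back into the branching argument and reach a contradiction.

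The main obstacle I anticipate is exactly this final upgrade. The linearity condition is what makes $2$-regular subhypergraphs scarce, and producing one (rather than merely an even-degree subhypergraph) requires carefully controlling the non-pivot vertices encountered along each Berge-walk so that the closure inherits the correct local degree profile. Balancing this control against the branching count — so that the fingerprint space is small enough for pigeonhole to bite while still capturing enough of the walk to ensure degree exactly $2$ at every surviving vertex — is where the precise trade-off yielding the bound $n\exp(C\sqrt{(\log n)(\log\log n)})$ must be calibrated.
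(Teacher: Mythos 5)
Your overall scaffolding --- pigeonhole on length-$k$ walk data with $k \approx \sqrt{\log n/\log\log n}$ and a sub-polynomial factor $\exp(C\sqrt{(\log n)(\log\log n)})$ emerging from optimising $d^k$ against the fingerprint space --- is at the right scale and in the right spirit, and to your credit you pinpoint the real difficulty yourself: a symmetric-difference argument naively only produces a subhypergraph in which all degrees are \emph{even}, not exactly $2$. Your proposed bridge across that gap does not work, and this is fatal.

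Concretely, your third step is circular. ``Iteratively strip off minimal $2$-regular configurations'' presupposes the existence of such a configuration, which is precisely what we are trying to prove; and an even, bounded-degree subhypergraph of a linear $3$-graph (such as the one the Dellamonica et al.\ result produces, with degrees even and at most $12$) need not contain any $2$-regular subhypergraph at all. Even degree is an $\mathbb F_2$-cycle-space condition and is far weaker than $2$-regularity; there is no peeling or parity argument that extracts a $2$-regular piece from a general even subhypergraph. So the final upgrade, which you correctly identify as the main obstacle, is missing, and the rest of the outline does not supply it.

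The paper avoids this issue by engineering $2$-regularity \emph{into} the objects being counted rather than hoping to recover it afterwards. It first passes to a $\mu$-balanced $3$-partite subhypergraph with parts $X,Y,Z$, $|X|\le |Y|=|Z|$, and recodes this as a properly edge-coloured graph on $Y\cup Z$ with colours from $X$; the target becomes a $2$-regular subgraph in which every colour appears exactly twice or not at all. It then splits into two cases according to whether the number of homomorphic $C_{2k}$'s is large or small relative to paths of length $k$. In the large-cycle case it counts ``nice'' sequences of vertex-disjoint rainbow $2k$-cycles, where niceness forbids any vertex of one cycle from being incident to an edge whose colour appears on another; in the small-cycle case it counts sequences of rainbow length-$k$ paths kept pairwise at distance $>k-1$ and drawn from a fixed representative family, pigeonholing on both colour set and endpoint set. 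These side conditions are the crux: they force that whenever two pigeonholed sequences share a vertex, the corresponding cycles (respectively paths) coincide, so the symmetric difference is genuinely $2$-regular, not merely even. Your ``enriched fingerprint'' intuition is gesturing towards this, but without something playing the role of these niceness/distance constraints and the cycle-vs-path dichotomy, the counting cannot deliver a $2$-regular subhypergraph. You would need to reformulate the problem in the edge-coloured graph model and prove analogues of those constraints; as written, the argument has a gap that the peeling idea does not close.
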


Dellamonica et al.  \cite{DHL+} also studied the problem of finding \emph{small} even-degree subhypergraphs. This question also comes up naturally in coding theory, where even-degree subhypergraphs correspond to linearly dependent vectors, see \cite{NV}. For a positive integer $t$, let us write $f(n;t)$ for the maximum number of edges in an $n$-vertex, linear $3$-graph which does not contain a $2$-regular subhypergraph with at most $t$ edges. They conjectured that there is a constant $C$ such that for every fixed positive integer $t$, we have $f(n;t)=O(n^{3/2+C/t})$. They noted that a standard probabilistic argument (see \cite{NV}) shows that this would be tight up to the value of $C$. We prove this conjecture as well in the following form.

\begin{theorem} \label{thm:bounded 2-reg}
    For any integer $\ell\geq 2$, there exists a constant $c=c(\ell)$ such that
    $$f(n;4\ell)\leq c n^{3/2+1/\ell}.$$
\end{theorem}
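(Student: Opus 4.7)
My plan is to adapt the Bondy--Simonovits-type BFS/pigeonhole approach for avoiding short even cycles to the linear-hypergraph setting, operating on the shadow graph of $H$ with an additional control on ``third vertices''.

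Start by iteratively deleting vertices of low degree to reduce to a subhypergraph of minimum degree $\delta \gtrsim n^{1/2+1/\ell}$, keeping at least a constant fraction of the edges. Form the shadow graph $G$ on $V(H)$ whose edges are the pairs contained in some hyperedge of $H$: by linearity, $|E(G)|=3|E(H)|$ and each edge $uv\in E(G)$ has a well-defined label $\phi(uv)\in V(H)$ (the third vertex of the unique hyperedge containing $\{u,v\}$). The structural observation driving the argument is that a $2$-regular subhypergraph of $H$ with $k$ hyperedges corresponds exactly to a closed walk in $G$ of length $k$ whose sequence of $\phi$-labels pairs up (each label appears an even number of times) and does not meet the walk's vertex set.

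The core of the proof is then a BFS-based count from an arbitrary vertex $v_0\in V(H)$: using the minimum degree, there are $\gtrsim \delta^{2\ell}= n^{\ell+2}$ walks of length $2\ell$ in $G$ starting at $v_0$. By pairing up walks sharing both endpoint and $\phi$-label multiset --- via a Cauchy--Schwarz/second-moment argument applied level by level along the BFS tree, in the spirit of the classical Bondy--Simonovits proof --- we obtain many matched pairs of walks; concatenating a matched pair produces a closed walk in $G$ of length $4\ell$ whose $4\ell$ labels pair up perfectly, yielding the desired $2$-regular subhypergraph with exactly $4\ell$ hyperedges after a routine cleanup handling degenerate coincidences (a label colliding with a walk vertex, repeated hyperedges, etc.). The factor $4$ in $4\ell$ is explained by the fact that each ``leg'' of the final closed walk has length $2\ell$, not $\ell$: we need extra slack in the pigeonhole because the balanced-multiset constraint is a priori rare.

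The main obstacle is the second-moment/Cauchy--Schwarz step: a naive pigeonhole into the ``(endpoint, $\phi$-label multiset)'' space of cardinality $\sim n^{2\ell+1}$ fails since our $\sim n^{\ell+2}$ walks fit comfortably inside it. To force the matching we must exploit the structure of the BFS tree and iteratively concentrate the walk distribution, much as in the classical Bondy--Simonovits argument but with the additional complication of tracking the $\phi$-labels along the walk. Making this carry through with the correct exponent $n^{3/2+1/\ell}$ --- and hence with the correct constant $c(\ell)$ --- is the main technical content of the proof.
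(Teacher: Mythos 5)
Your proposal correctly identifies the target structure (a closed walk in the shadow graph whose ``third-vertex'' labels pair up), but the central counting step is left unresolved, and I do not think it can be resolved in the form you describe. You compute $\sim\delta^{2\ell}=n^{\ell+2}$ walks of length $2\ell$ from a fixed vertex, note that the space of (endpoint, label-multiset) pairs has size $\sim n^{2\ell+1}$, acknowledge that direct pigeonhole fails, and then appeal to ``a Cauchy--Schwarz/second-moment argument applied level by level along the BFS tree, in the spirit of the classical Bondy--Simonovits proof.'' But classical Bondy--Simonovits does not pair up walks by label multiset; it bounds $\hom(C_{2\ell},G)$ or controls BFS expansion, and neither mechanism gives you two walks with the \emph{same label multiset}, which is a far more restrictive coincidence than two walks with the same endpoints. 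As you note yourself, ``making this carry through with the correct exponent \dots is the main technical content of the proof,'' which is precisely the part missing here.

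The paper instead uses a ``squaring'' trick that sidesteps this entirely. After reducing (via a $3$-partition) to a properly edge-coloured graph $G$ on $\leq n$ vertices with $\geq Cn^{3/2+1/\ell}$ edges and $\leq n$ colours, one constructs the auxiliary graph $\cG$ on ordered pairs $(u_1,u_2)$ of distinct vertices, joining $(u_1,u_2)$ to $(v_1,v_2)$ when $u_1v_1$ and $u_2v_2$ are edges of $G$ \emph{of the same colour}. The Cauchy--Schwarz you were groping for appears here in a clean form: if colour $c$ is used $t_c$ times, $\cG$ inherits $\binom{t_c}{2}$ edges of colour $c$, so by convexity $e(\cG)\gtrsim C^2 n^{2+2/\ell}\geq \frac{C^2}{4}|V(\cG)|^{1+1/\ell}$, which is exactly the Bondy--Simonovits threshold for a $C_{2\ell}$ in a graph on $n^2$ vertices. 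A $2\ell$-cycle in $\cG$ then unfolds, coordinate by coordinate, into two $2\ell$-cycles in $G$ with the same colour multiset. The degeneracies you wave away as ``routine cleanup'' (vertex coincidences, colour repeats within a cycle) are handled by a nontrivial refined extremal lemma from \cite{Jan20} (Lemma~\ref{lem:good cycle}), which finds a $C_{2\ell}$ in $\cG$ avoiding the two ``bad'' relations $\sim$ and $\approx$ under a bounded-codegree-type hypothesis. This lemma is essential, not routine, and is not available from a bare BFS count. So the gap in your proposal is twofold: the label-multiset coincidence step is not established, and the cleanup step requires machinery you have not supplied.
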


Another motivation for studying 2-regular subhypergraphs comes from a celebrated conjecture of Feige \cite{F08}. An \emph{even hypergraph} is a non-empty hypergraph in which every vertex is contained in an even number of edges. An elementary linear algebraic argument shows that every hypergraph with $n$ vertices and at least $n+1$ edges contains an even subhypergraph. Motivated by average-case problem of refuting random 3SAT formulas, Feige conjectured that for every $k$ and $\ell$, every $k$-uniform hypergraph on $n$ vertices and $\Omega_{k}(n(\frac{n}{\ell})^{k/2-1})$ edges contains an even subhypergraph on at most $O_{k}(\ell \log n)$ vertices. Up to polylogarithmic factors, this conjecture was recently resolved by Guruswami, Kothari, and Manohar \cite{GKM22}, noting that $k=3$ is among the more challenging cases. For linear 3-graphs, one can view  Theorems \ref{thm:2-reg} and \ref{thm:bounded 2-reg} as a strengthening of this in the extreme cases $\ell=n$ and $\ell=\Theta(1)$ (with worse bounds on the required number of edges but with a better bound on the size of the subhypergraph), where we guarantee a 2-regular subhypergraph.

\medskip

We  conclude by discussing the case of general $k$ and $r$. Let us write $f_r^{(k)}(n)$ for the maximum possible number of edges in an $n$-vertex linear $k$-graph which does not contain an $r$-regular subhypergraph. Dellamonica et al. \cite{DHL+} observed that $f_2^{(k)}(n)\leq Cn^{2-\frac{1}{k-1}}(\log n)^{Ck}$ for some absolute constant $C$. They asked whether any large linear $3$-graph with a quadratic number of edges must contain a $3$-regular subhypergraph, that is, whether $f_3^{(3)}(n)=o(n^2)$. This was answered affirmatively by Kim \cite{Kim16}, who proved that for any pair of integers $k,r\geq 3$ and sufficiently large $n$, we have
   $f_r^{(k)}(n)<6n^2(\log \log n)^{-\frac{1}{2(k-1)}}.$
We greatly improve this upper bound, and provide a lower bound which matches our upper bound up to logarithmic factors if $r=\Omega(\log n)$.

\begin{theorem}\label{thm:k-uni_r-reg}
    For any $k\geq 2$, there are positive constants $C_0=C_0(k)$, $C_1=C_1(k)$ and $C_2=C_2(k)$ such that for every $2\leq r\leq n$, we have
    $$C_0 r^{\frac{1}{k-1}} n^{2-\frac{r}{(r-1)(k-1)}} \leq f_r^{(k)}(n)\leq C_1r^{\frac{1}{k-1}}n^{2-\frac{1}{k-1}}(\log n)^{C_2}.$$
\end{theorem}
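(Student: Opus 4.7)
The plan is to prove the lower and upper bounds separately.

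For the lower bound, I would use a standard probabilistic deletion argument. Set $p = c\, r^{1/(k-1)} n^{-(k-2) - r/((r-1)(k-1))}$ for a small constant $c = c(k) > 0$ and take the random $k$-graph $G \sim G^{(k)}(n, p)$; by the choice of $p$, $\mathbb{E}[|E(G)|]$ has the target order $r^{1/(k-1)} n^{2 - r/((r-1)(k-1))}$. The expected number of pairs of edges sharing at least two vertices is $O(n^{2k-2} p^2)$, which is $o(\mathbb{E}[|E(G)|])$ for this range of $p$, so a negligible number of edge deletions makes $G$ linear. Separately, every $r$-regular linear sub-$k$-graph on $v$ vertices has exactly $vr/k$ edges, so the expected number of copies in $G$ is at most
\[
\sum_{v} \binom{n}{v}\, N(v, k, r)\, p^{vr/k},
\]
where $N(v, k, r)$ is the number of labelled $r$-regular linear $k$-graphs on $v$ vertices. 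With a standard bound such as $N(v, k, r) \leq (C v^{k-1}/r)^{vr/k}$, the exponent of $n$ in $p$ is calibrated precisely so that each summand is a negative geometric power of $n$, making the total less than half of $\mathbb{E}[|E(G)|]$. Deleting one edge from each bad pair and each $r$-regular copy then yields the desired construction; verifying the summability is the main technical step.

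For the upper bound, I would extend the argument of Dellamonica et al., which handles $r=2$ with the same exponent $2 - 1/(k-1)$, to general $r$. First I would apply the usual degree-truncation (iteratively deleting vertices whose degree lies below half the current average) to pass to a subhypergraph $H'$ with minimum degree $d$ comparable to the average degree and still $\Omega(|E(H)|)$ edges. It then suffices to prove: \emph{if $H'$ is a linear $k$-graph on $n' \leq n$ vertices with minimum degree $d \gg r^{1/(k-1)} (n')^{1-1/(k-1)} (\log n)^{O(1)}$, then $H'$ contains an $r$-regular subhypergraph.} For this I would exploit the structural fact that in a linear $k$-graph the link of every vertex is a $(k-1)$-uniform matching, and count homomorphic copies of a fixed minimal $r$-regular linear ``template'' $F$ (whose vertex set has size $\approx rk$, the minimum possible by the linearity constraint $rv/k \leq v(v-1)/(k(k-1))$) via a Cauchy--Schwarz or entropy-type argument. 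The exponent $r^{1/(k-1)}$ arises naturally from the edge-to-vertex ratio $r/k$ in $F$, combined with the linear upper bound on edges among $v$ vertices.

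The principal obstacle will be the step from ``many homomorphic copies of $F$'' to ``a genuine $r$-regular subhypergraph embedded in $H'$''. Generic probabilistic or algebraic methods (for example, looking for vectors in the kernel of the vertex--edge incidence matrix modulo $r$) typically yield only ``all degrees divisible by $r$'' or ``approximately $r$-regular'' substructures, and refining these to exactly $r$-regular is where the $(\log n)^{C_2}$ polylogarithmic factor in the upper bound is expected to be consumed, either through a concentration-based correction or through iterated edge-trading in a near-regular candidate.
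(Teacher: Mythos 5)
Both halves of your plan diverge from the paper's proof, and both contain genuine gaps.

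\textbf{Lower bound.} Your plan to sample a plain binomial $k$-graph $G^{(k)}(n,p)$ and delete an edge from every $r$-regular subhypergraph does not work for $r\geq 3$ (nor for $r=2$, $k\geq 4$). Write $\beta=\frac{r}{(r-1)(k-1)}$, so the target edge count is $n^{2-\beta}$. For the first-moment deletion argument to leave a positive fraction of edges, one needs the expected number of $r$-regular linear sub-$k$-graphs to be $o(n^{2-\beta})$ over \emph{all} vertex-set sizes $v\leq n$. Carrying out the computation you sketch, the summand is roughly $\bigl((en/v)\cdot (Cv^{k-1}p/r)^{r/k}\bigr)^{v}$, and at $v\sim n$ the base scales like $n^{\frac{r}{k}(1-\beta)}$ up to constants. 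Since $1-\beta>0$ whenever $(r,k)\neq(2,3)$, this base tends to infinity, so the union bound fails catastrophically at the upper end of the range of $v$; the constant $c$ in $p$ cannot save it. This reflects a real phenomenon: a uniform random $k$-graph with $n^{2-\beta}\gg n$ edges typically contains many $r$-regular subhypergraphs. The paper's construction sidesteps this by partitioning the vertex set as $A\cup B$ with $|A|\sim r^{\beta}n^{1-\beta}$ very small, and only sampling edges that meet $A$ in exactly one vertex. Any $r$-regular subhypergraph on vertex set $U$ then satisfies $|U\cap B|=(k-1)|U\cap A|\leq (k-1)|A|$, so the dangerous large-$v$ terms never arise, and the deletion argument only has to control "bad" $(k-1)$-subhypergraphs of the projection to $B$ on at most $(k-1)|A|$ vertices. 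Without some device to cap the size of the $r$-regular subhypergraphs you must rule out, your calibration cannot close.

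\textbf{Upper bound.} The paper's proof is unrelated to homomorphism counting. After the standard $\mu$-balanced regularization (Lemma \ref{lem:DHL regularization}), it counts matchings of size $t\approx |X_1|/(2k\mu)$ (Lemma \ref{lemma:num_matching}), pigeonholes to find a vertex set $U$ of size $tk$ spanned by more than $(\alpha r\log(tr))^{t}$ such matchings, and then invokes Rao's quantitative Sunflower Lemma (Lemma \ref{lemma:sunflower}) to extract an $r$-sunflower among those matchings; deleting the core leaves an $r$-regular subhypergraph. Your proposed route---counting homomorphic copies of a minimal $r$-regular template and then upgrading to a genuine embedding---leaves exactly the step you flag (``from many homomorphic copies of $F$ to a genuine $r$-regular subhypergraph'') unproved, and I do not see how to complete it at the stated exponent; that is not a technicality but the crux. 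The sunflower approach is precisely what bypasses the problem: a sunflower of matchings over a fixed vertex set $U$ \emph{is} an $r$-regular structure after removing the common core, so no ``upgrading'' step is needed. You would do well to replace the homomorphism-counting plan with this matching-plus-sunflower argument.
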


\subsection{Hypergraph immersion}

To conclude the introduction, let us discuss an interesting topological corollary of Theorem \ref{thm:2-reg}. A $k$-graph $G$ naturally corresponds to the $(k-1)$-dimensional \emph{pure simplicial complex} (i.e., a simplicial complex in which every simplex of dimension less than $k-1$ is a face of a simplex of dimension exactly $k-1$) that is the downset generated by the edges of $G$. Therefore, we can talk about $k$-graphs and $(k-1)$-dimensional simplicial complexes interchangeably. Say that a $k$-graph (or topological space) $H$ \emph{embeds} into the $k$-graph $G$ if $G$ contains a \emph{homeomorphic} copy of $H$. E.g. the complete graph $K_t$ embeds into a graph $G$ if and only if $G$ contains a subdivision of $K_t$. An old result of Brown, Erd\H{o}s, and S\'os \cite{BES73} states that if the sphere $S^2$ does not embed into a 3-graph $G$ on $n$ vertices, then $e(G)=O(n^{5/2})$, and this bound is the best possible. This was extended by Kupavskii, Polyanskii, Tomon, and Zakharov \cite{KPTZ20} who showed that if $S$ is an orientable closed surface and $S$ does not embed into $G$, then $e(G)=O_{S}(n^{5/2})$. Furthermore, they showed that there exists a $3$-graph on $n$ vertices  with $\Omega(n^{5/2})$ edges which contains no embedding of \emph{any} closed surface\footnote{By the Classification theorem of closed surfaces, a closed surface is either a sphere, the connected sum of a finite number of tori, or the connected sum of a finite number of projective planes.}. However, if we relax the notion of embedding to allow the merging of vertices, such a construction is no longer possible. Let us explain this in more detail.

In the case of graphs, Nash-Williams \cite{N65} introduced the notion of \emph{immersion} as a relaxation of subdivision. A graph $G$ contains a graph $H$ as an \emph{immersion} if there exist a function $f:V(H)\rightarrow V(G)$ and for every $uv\in E(H)$ a path $P_{uv}$ in $G$ with endpoints $f(u)$ and $f(v)$ such that the paths $\{P_{uv}\}_{uv\in E(H)}$ are edge-disjoint. In other words, there is a homomorphism $\phi:H'\rightarrow G$ such that $H'$ is homeomorphic to $H$ and $\phi$ is injective on the edges. Here, given $(k-1)$-dimensional simplicial complexes $H$ and $G$, a function $\phi:H\rightarrow G$ is a \emph{homomorphism} (not to be confused with homeomorphism) if $\phi$ maps $i$-faces to $i$-faces for every $i=0,\dots,k-1$, and if $\{x_0,\dots,x_i\}$ is an $i$-face of $H$, then $\phi(\{x_0\dots,x_i\})=\{\phi(x_0),\dots,\phi(x_i)\}$.  The extremal number of immersions of $K_t$ are extensively studied \cite{DDFMMS,DY18,GLW19}, motivated by an old conjecture of Lescure and Meyniel \cite{LM88}.

  One can extend the definition of immersion to $k$-graphs in the following natural way. Say that a $k$-graph $G$ contains an \emph{$\ell$-immersion} of $H$ if there exists a homomorphism $\phi:H'\rightarrow G$ such that $H'$ is a homeomorphic copy of $H$, and $\phi$ is injective on the $i$-faces for every $\ell<i\leq k-1$. In other words $H$ can be almost embedded in $G$, but some of the lower dimensional faces might get merged. Here is the promised corollary of Theorem \ref{thm:2-reg}.

\begin{theorem}\label{thm:immersion}
	Let $G$ be a 3-graph on $n$ vertices which contains no 0-immersion of any closed surface. Then 
	$$e(G)\leq n^{2+o(1)}.$$
\end{theorem}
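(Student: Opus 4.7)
The plan is to reduce Theorem~\ref{thm:immersion} to Theorem~\ref{thm:2-reg} via an auxiliary construction, and then convert the resulting 2-regular sub-3-graph into a 0-immersed closed surface. First, I would define an auxiliary 3-graph $G^*$ whose vertex set is the collection of pairs $\{u,v\}\subseteq V(G)$ lying in some edge of $G$, and whose edges correspond to the triangles of $G$: each triangle $T=\{u,v,w\}\in E(G)$ becomes the hyperedge $\{\{u,v\},\{v,w\},\{u,w\}\}$ of $G^*$. The crucial observation is that $G^*$ is automatically linear, since two distinct triangles of $G$ share at most two vertices, hence at most one pair. With $|V(G^*)|\leq \binom{n}{2}$ and $|E(G^*)|=e(G)$, the quantitative form of Theorem~\ref{thm:2-reg} yields a non-empty 2-regular sub-3-graph of $G^*$ as soon as $e(G)\geq n^2 \exp\bigl(C\sqrt{(\log n)(\log\log n)}\bigr)$ for a sufficiently large constant $C$. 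Translated back to $G$, this produces a non-empty family $\mathcal{T}$ of distinct triangles such that every pair of vertices covered by $\mathcal{T}$ lies in exactly two triangles of $\mathcal{T}$.

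The next step is topological. Viewing $\mathcal{T}$ as a 2-dimensional simplicial complex $K$, the above property means that every 1-face of $K$ lies in exactly two 2-faces, so the link of every vertex of $K$ is 2-regular, hence a disjoint union of cycles. I would then split each vertex $v$ of $K$ into one copy per cycle in its link, sending each triangle $\{v,a,b\}\in\mathcal{T}$ to the copy of $v$ corresponding to the cycle of the link of $v$ that contains the edge $\{a,b\}$. A short self-consistency check shows that this is well-defined: if $\{u,v\}$ lies in triangles $T_1,T_2\in\mathcal{T}$, then the two link-edges at $v$ coming from $T_1,T_2$ are both incident to $u$, and since $u$ has degree exactly $2$ in the link of $v$, both edges lie in a common cycle, so $T_1$ and $T_2$ are sent to the same split copy of $v$ (and, symmetrically, of $u$). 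The resulting complex $K'$ then has every 1-face in exactly two 2-faces and every vertex link a single cycle, so each connected component of $K'$ is a closed surface, and the tautological simplicial map $K'\to K\hookrightarrow G$ is injective on 1-faces and 2-faces, providing the desired 0-immersion. The bound in the theorem then follows by contrapositive.

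The combinatorial engine is essentially black-boxed inside Theorem~\ref{thm:2-reg}; the part I expect to require the most care is the topological bookkeeping in the second step, where one must verify that the vertex-splitting is globally consistent and that the resulting map into $G$ remains injective on 1-faces even after the pinch-point identifications are absorbed into $V(G)$.
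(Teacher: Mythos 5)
Your proposal is correct and follows essentially the same route as the paper: build the auxiliary linear $3$-graph on pairs of vertices, invoke Theorem~\ref{thm:2-reg} to extract a family of triangles in which every covered pair lies in exactly two triangles, and then split vertices according to the cycle decomposition of their links to produce a closed surface mapping into $G$ injectively on $1$- and $2$-faces. The only cosmetic difference is that you perform the vertex-splitting globally in one step, whereas the paper formulates it as an iterative cloning process; your self-consistency check correctly handles the well-definedness and injectivity concerns you flag at the end.
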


The bound in Theorem \ref{thm:immersion} is also best possible up to the $o(1)$ error term. Indeed, there exist linear 3-graphs on $n$ vertices with $\Omega(n^2)$ edges (e.g. Steiner triple systems), and such a 3-graph clearly cannot contain a 0-immersion of a surface.

\medskip

\noindent
\textbf{Organization.} Our paper is organized as follows. In the next section we provide some preliminary results. Then, in Section \ref{sect:2-reg}, we prove Theorem \ref{thm:2-reg}, in Section \ref{sect:2-reg_bounded}, we prove Theorem \ref{thm:bounded 2-reg}, in Section \ref{sect:r-reg}, we prove Theorem \ref{thm:k-uni_r-reg} and in Section \ref{sect:immersion} we prove Theorem \ref{thm:immersion}. We finish our paper with some concluding remarks.

\section{Preliminaries}

In this section, we present some basic results that will be used throughout the paper.

We will use the following special case of the well known Chernoff bound. The inequalities follow fairly easily from Theorem 4 in \cite{chernoff}.

\begin{lemma}[Multiplicative Chernoff bound]\label{lemma:chernoff}
 Let $X$ be the sum of independent indicator random variables. If $\lambda\geq 2\mathbb{E}(X)$, then
 $$\mathbb{P}(X\geq \lambda)\leq e^{-\lambda/6}.$$
 Also,
 $$\mathbb{P}(X\leq \mathbb{E}(X)/2)\leq e^{-\mathbb{E}(X)/8}.$$
\end{lemma}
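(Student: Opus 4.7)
The plan is to derive both inequalities from the standard Chernoff moment-generating-function argument. Write $X=\sum_{i=1}^{m} X_i$ where the $X_i$ are independent $\{0,1\}$ random variables with $\mathbb{E}(X_i)=p_i$, and set $\mu=\mathbb{E}(X)=\sum p_i$. For any $s\in\mathbb{R}$, Markov's inequality applied to $e^{sX}$ gives
$$\mathbb{P}(X\geq \lambda)\leq e^{-s\lambda}\,\mathbb{E}(e^{sX})\quad (s>0),\qquad \mathbb{P}(X\leq a)\leq e^{sa}\,\mathbb{E}(e^{sX})\quad (s<0).$$
Using independence together with the elementary inequality $\mathbb{E}(e^{sX_i})=1+p_i(e^s-1)\leq \exp(p_i(e^s-1))$, one obtains the uniform bound $\mathbb{E}(e^{sX})\leq \exp(\mu(e^s-1))$, which is the content of Theorem 4 in \cite{chernoff}.

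For the upper tail, substitute $s=\ln(\lambda/\mu)>0$, which is allowed since $\lambda\geq 2\mu>0$. Writing $r=\lambda/\mu\geq 2$, the resulting exponent is
$$\mu(e^s-1)-s\lambda \;=\; \lambda - \mu - \lambda\ln r \;=\; -\lambda\bigl(\ln r+\tfrac{1}{r}-1\bigr).$$
Hence it suffices to check that $g(r):=\ln r+1/r-1\geq 1/6$ for every $r\geq 2$. Since $g'(r)=1/r-1/r^2>0$ for $r>1$, the function is increasing on $[2,\infty)$, and a direct evaluation gives $g(2)=\ln 2-1/2\approx 0.193>1/6$. Therefore $\mathbb{P}(X\geq \lambda)\leq e^{-\lambda/6}$, as required.

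For the lower tail, take $s=-\ln 2<0$ in the same bound, applied to $a=\mu/2$. The exponent becomes
$$\mu(e^{-\ln 2}-1)-\tfrac{\mu}{2}\ln 2 \;=\; -\tfrac{\mu}{2}-\tfrac{\mu\ln 2}{2} \;=\; -\tfrac{\mu(1+\ln 2)}{2},$$
and since $(1+\ln 2)/2>1/8$ (indeed the left side exceeds $0.84$), this yields $\mathbb{P}(X\leq \mu/2)\leq e^{-\mu/8}$. Both bounds therefore reduce, once the Chernoff moment-generating bound is in hand, to the verification of a single elementary one-variable inequality, which is the only real step; the rest is mechanical substitution.
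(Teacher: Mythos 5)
The paper offers no proof of this lemma: it just cites Theorem 4 of \cite{chernoff}, which is the moment-generating-function bound $\mathbb{E}(e^{sX})\leq\exp(\mu(e^s-1))$ that you derive and then optimize, so your route is exactly the intended one. Your upper-tail argument is complete and correct: the substitution $s=\ln(\lambda/\mu)$, the reduction to $\ln r+1/r-1\geq 1/6$ for $r\geq 2$, and the monotonicity check all work (the degenerate case $\mathbb{E}(X)=0$, where $s$ is undefined, makes the claim trivially true and deserves at most a parenthetical).

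Your lower-tail computation, however, contains a sign error. Markov's inequality applied to $e^{sX}$ with $s<0$ gives $\mathbb{P}(X\leq a)=\mathbb{P}(e^{sX}\geq e^{sa})\leq e^{-sa}\,\mathbb{E}(e^{sX})$; the prefactor is $e^{-sa}$, not $e^{sa}$ as in your display (for $s<0$ and $a>0$ your version asserts a strictly stronger bound, which is false in general). With the correct prefactor, the exponent at $s=-\ln 2$ and $a=\mu/2$ is $\mu(e^{s}-1)-sa=-\tfrac{\mu}{2}+\tfrac{\mu\ln 2}{2}=-\tfrac{\mu(1-\ln 2)}{2}$, not $-\tfrac{\mu(1+\ln 2)}{2}$. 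The lemma still follows, since $(1-\ln 2)/2\approx 0.153>1/8$, but the constant you checked ($>0.84$) is not the one the method actually yields, so this step must be corrected before the verification is valid.
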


Following \cite{DHL+}, we say that a $k$-partite $k$-uniform hypergraph $G$ with parts $X_1,\dots,X_k$ is \emph{$\mu$-balanced} if for each $1\leq i\leq k$,
\begin{eqnarray}
\label{def1}
\max_{w\in X_i} d_G(w)\leq \frac{\mu e(G)}{|X_i|}.
\end{eqnarray}

The following lemma is a straightforward generalization of Lemma 6 from \cite{DHL+} (which is essentially the same result for $k=3$).

\begin{lemma} \label{lem:DHL regularization}
    Let $G$ be an $n$-vertex linear $k$-graph and let $\lambda=\lceil \log n\rceil$. Then $G$ contains a $2\lambda^k$-balanced $k$-partite subhypergraph with at least $e(G)k!/(k\lambda)^k$ edges.
\end{lemma}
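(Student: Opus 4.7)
My plan is to combine a random $k$-partition (to obtain a dense $k$-partite subhypergraph) with a dyadic bucketing of vertex degrees (to enforce balance). For the first step, I assign each vertex of $G$ to one of $k$ parts independently and uniformly at random. Any fixed edge becomes ``rainbow'' (one vertex in each part) with probability $k!/k^{k}$, so by averaging there exists a partition for which the resulting $k$-partite subhypergraph $G_{1}$ on parts $V_{1},\dots,V_{k}$ satisfies $e(G_{1})\geq e(G)\,k!/k^{k}$.

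Next I bucket the vertices of each part according to their degree in $G_{1}$. For every $i\in\{1,\dots,k\}$ and $j\in\{1,\dots,\lambda\}$, set
\[
V_{i,j}=\{v\in V_{i}:\ 2^{j-1}\leq d_{G_{1}}(v)<2^{j}\}.
\]
Since $G$ is linear, every vertex has degree at most $(n-1)/(k-1)<2^{\lambda}$, so these $\lambda$ buckets cover all non-isolated vertices of $V_{i}$. Each edge of $G_{1}$ therefore carries a \emph{type} $(j_{1},\dots,j_{k})\in\{1,\dots,\lambda\}^{k}$ recording the bucket of each of its vertices. Since there are at most $\lambda^{k}$ possible types, the pigeonhole principle supplies a type $(j_{1}^{*},\dots,j_{k}^{*})$ realized by at least $e(G_{1})/\lambda^{k}$ edges. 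Let $H$ be the $k$-partite subhypergraph with parts $X_{i}:=V_{i,j_{i}^{*}}$ consisting of exactly these edges.

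It remains to verify the two required properties. By construction, $e(H)\geq e(G_{1})/\lambda^{k}\geq e(G)\,k!/(k\lambda)^{k}$. For the balance condition, fix $i$ and $w\in X_{i}$; by definition $d_{H}(w)\leq d_{G_{1}}(w)<2^{j_{i}^{*}}$. On the other hand, every vertex of $X_{i}$ has $G_{1}$-degree at least $2^{j_{i}^{*}-1}$, and since $G_{1}$ is $k$-partite we have $\sum_{v\in V_{i}} d_{G_{1}}(v)=e(G_{1})$, which yields
\[
|X_{i}|\cdot 2^{j_{i}^{*}-1}\leq \sum_{v\in X_{i}}d_{G_{1}}(v)\leq e(G_{1})\leq \lambda^{k}\,e(H).
\]
Combining these, $d_{H}(w)\cdot|X_{i}|<2^{j_{i}^{*}}\cdot\bigl(2\lambda^{k}e(H)/2^{j_{i}^{*}}\bigr)=2\lambda^{k}e(H)$, which is precisely the $2\lambda^{k}$-balanced condition in \eqref{def1}. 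This is essentially the $k=3$ argument of \cite{DHL+} adapted to general $k$, so no real obstacle arises; the only subtle point is to confirm that $\lambda$ is large enough to classify every possible degree, which is exactly guaranteed by the linearity of $G$.
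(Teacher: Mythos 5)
Your proof is correct and follows essentially the same approach as the paper's: a random $k$-partition to obtain a dense $k$-partite subhypergraph, dyadic degree buckets in each part, and pigeonhole over the $\lambda^k$ type vectors to extract the balanced subhypergraph $H$, with the balance condition verified via the identity $\sum_{v\in V_i}d_{G_1}(v)=e(G_1)$. Your explicit justification that $\lambda$ buckets suffice (using the linearity bound $d(v)\leq(n-1)/(k-1)$) is a minor point the paper leaves implicit, but otherwise the two arguments coincide.
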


Although the proof of this lemma is almost identical to that of Lemma 6 in \cite{DHL+}, we include it for completeness.

\begin{proof}
    By a considering random partition of $V(G)$ into $k$ parts, there is a partition $V(G)=X_1\cup\dots\cup X_k$ such that the $k$-partite subhypergraph $G'$ of $G$ with parts $X_1,\dots,X_k$ has at least $k! e(G)/k^{k}$ edges. By discarding isolated vertices, we may assume that every vertex in $G'$ has degree at least one. For each $1\leq i\leq k$, define a partition $X_i=X_i^1\cup \dots \cup X_i^{\lambda}$ by setting $X_i^j=\{x\in X_i: 2^{j-1}\leq d_{G'}(x)<2^j\}$. Note that there exist $1\leq j_1,\dots,j_k\leq \lambda$ such that $H=G'[X_1^{j_1}\cup \dots \cup X_k^{j_k}]$ has at least $e(G')/\lambda^k$ edges. On the other hand, clearly, $|X_i^{j_i}|2^{j_i-1}\leq e(G')$, so $$\max_{w\in X_i^{j_i}} d_H(w)\leq 2^{j_i}\leq \frac{2e(G')}{|X_i^{j_i}|}\leq \frac{2\lambda^ke(H)}{|X_i^{j_i}|},$$ as desired.
\end{proof}

The next result is a simple extension of the $k=3$ case of Lemma \ref{lem:DHL regularization} which shows that we can in addition assume that the two largest parts have equal size.

\begin{lemma} \label{lem:balanced regularization}
    Let $n$ be sufficiently large and let $d\geq 1$. Let $G$ be an $n$-vertex  linear $3$-graph with at least $nd$ edges and let $\lambda=\lceil\log n \rceil$. Then $G$ contains a $96\lambda^6$-balanced subhypergraph $H$ with at least $|V(H)|d/(81\lambda^3)$ edges and parts $X,Y,Z$ such that $|X|\leq |Y|=|Z|$.
\end{lemma}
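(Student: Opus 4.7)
The plan is to first apply Lemma~\ref{lem:DHL regularization} with $k=3$, obtaining a 3-partite, $2\lambda^3$-balanced sub-hypergraph $G_0 \subseteq G$ with parts $A_1,A_2,A_3$ and $e(G_0) \ge 2e(G)/(9\lambda^3) \ge 2nd/(9\lambda^3)$. After relabelling so that $|A_1|\le|A_2|\le|A_3|$, the task reduces to producing a subset $A_3' \subseteq A_3$ of size exactly $|A_2|$ such that $H := G_0[A_1\cup A_2\cup A_3']$ retains enough edges and is $96\lambda^6$-balanced; setting $X=A_1$, $Y=A_2$, $Z=A_3'$ then yields $|X|\le|Y|=|Z|$.

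I would choose $A_3'$ by Bernoulli-sampling each vertex of $A_3$ independently with probability $p=|A_2|/|A_3|$ (and then using concentration to truncate down to exactly $|A_2|$ vertices). Two observations drive the analysis. First, $\mathbb{E}[e(H)]=p\,e(G_0)=|A_2|\,e(G_0)/|A_3|$; combined with $|V(H)|\le|A_1|+2|A_2|\le 3|A_2|$ and the lower bound on $e(G_0)$, and using only $|A_3|\le n$, one checks that $\mathbb{E}[e(H)]\ge 6|V(H)|d/(81\lambda^3)$, with no dependence on the ratio $|A_3|/|A_2|$. Second, because $G_0$ is linear, for any fixed $w\in A_1\cup A_2$ the edges of $G_0$ through $w$ use pairwise distinct vertices of $A_3$, so $d_H(w)\sim\mathrm{Bin}(d_{G_0}(w),p)$ under Bernoulli sampling.

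I would then apply Lemma~\ref{lemma:chernoff} to $|A_3'|$, to $e(H)$, and to each $d_H(w)$, union-bounding over $w\in A_1\cup A_2$. The upper tail gives $d_H(w)\le \max\{2p\,d_{G_0}(w),\,C\lambda\}$ with total failure probability $\le 1/n$. In the first alternative, the $2\lambda^3$-balance of $G_0$ together with $e(H)\ge\tfrac12\mathbb{E}[e(H)]$ yields $d_H(w)\le 8\lambda^3\,e(H)/|A_i|$, comfortably inside the $96\lambda^6$-balanced window; the $A_3'$-degrees are automatically fine since $|A_3'|=|A_2|\le|A_3|$ and $G_0$ was $2\lambda^3$-balanced on $A_3$. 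The edge lower bound $e(H)\ge |V(H)|d/(81\lambda^3)$ follows from the factor-$6$ slack in $\mathbb{E}[e(H)]$ and the Chernoff lower tail, provided $\mathbb{E}[e(H)]=\Omega(\log n)$, which holds for $n$ sufficiently large via the linear-hypergraph inequality $e(G_0)\le|A_1||A_2|$ (forcing $|A_2|\gtrsim\sqrt{nd/\lambda^3}$).

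The main obstacle is the second alternative $d_H(w)\le C\lambda$, which dominates when $|A_3|/|A_2|\gg\lambda^3$ and hence $p\,d_{G_0}(w)$ can be much smaller than $\log n$. One must then verify $C\lambda\le 96\lambda^6\,e(H)/|A_i|$, i.e.\ $e(H)\gtrsim|A_i|/\lambda^5$. Substituting the expectation bound $e(H)\gtrsim |A_2|nd/(\lambda^3|A_3|)$ and $|A_i|\le|A_2|$, this reduces to $\lambda^2 d\gtrsim|A_3|/n$, which is immediate from $|A_3|\le n$ and $d\ge 1$. It is precisely here that the $\lambda^3$-slack between the $2\lambda^3$-balance of $G_0$ and the target $96\lambda^6$-balance of $H$ is spent, and where the hypothesis that $n$ be sufficiently large is genuinely used.
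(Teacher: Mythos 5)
Your proposal is correct in substance and reaches the same endpoint, but by a genuinely different route than the paper. The paper, after Lemma~\ref{lem:DHL regularization}, inserts an iterative degree-cleanup: it repeatedly discards from each part $W$ any vertex of degree below $e(F)/(6|W|)$ before sampling. This cleanup is the crux: it yields a \emph{deterministic} minimum degree $\geq e(F)/(6|W|)$ and hence a deterministic lower bound $e(H)\geq |Z|\,e(F)/(6|C|)$ for every choice of the random set $Z$, so no Chernoff lower tail for $e(H)$ is needed at all. It also means the Chernoff threshold $96\lambda^6 e(H)/|W|$ is deterministically $\gtrsim (\log n)^3$, so the upper tail for each $d_H(w)$ applies cleanly even when $\mathbb{E}(d_H(w))$ is tiny — no case split. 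You instead skip the cleanup, treat $e(H)$ as a concentrated random variable, and handle small-expectation degrees via the $\max\{2p\,d_{G_0}(w),C\lambda\}$ dichotomy; each piece of this checks out, and the $e(G_0)\leq|A_1||A_2|$ bound to force $\mathbb{E}[e(H)]=\Omega(\log n)$ is the right observation.

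The one place you should be more careful is the Bernoulli-sample-then-truncate step. The paper samples a uniform subset of fixed size $|B'|$ directly, sidestepping truncation entirely. You sample with probability $p=|A_2|/|A_3|$, so $|A_3'|$ fluctuates by about $\sqrt{|A_2|}$, and to land exactly on $|A_2|$ you must either add or remove $\Theta(\sqrt{|A_2|\log n})$ vertices with high probability. Adding vertices can inflate degrees in $A_1\cup A_2$; removing vertices costs edges. Either way the effect is a $(1\pm O(\lambda^3\sqrt{(\log n)/|A_2|}))$-factor perturbation, and you should verify this is $o(1)$ using $|A_2|\gtrsim\sqrt{nd/\lambda^3}$, which does hold for $n$ large but eats into your factor-$6$ slack on the edge count. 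Alternatively, mimic the paper and take a uniform random $|A_2|$-subset of $A_3$; the required concentration for hypergeometric degrees still holds (the indicators are negatively associated), and the truncation issue disappears. In short: your argument is a valid alternative that trades the paper's deterministic-cleanup step for extra probabilistic bookkeeping; the paper's version is tighter in exactly the places where your version needs the truncation and the case split.
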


\begin{proof}
    By Lemma \ref{lem:DHL regularization}, $G$ has a $2\lambda^3$-balanced $3$-partite subhypergraph $F$ with at least $2e(G)/(9\lambda^3)$ edges. Let $F$ have parts $A$, $B$ and $C$. Starting with $F'=F$, $A'=A$, $B'=B$ and $C'=C$, repeat the following procedure. For $W\in \{A,B,C\}$, if there exists $w\in W'$ such that $d_{F'}(w)<\frac{e(F)}{6|W|}$, remove $w$ from $F'$ and $W'$; otherwise stop. In total, we removed at most $|A|\cdot \frac{e(F)}{6|A|}+|B|\cdot \frac{e(F)}{6|B|}+|C|\cdot \frac{e(F)}{6|C|}=\frac{e(F)}{2}$ edges, so at the end of the process, $e(F')\geq \frac{e(F)}{2}$. Also, for every $W\in \{A,B,C\}$, $\min_{w\in W'} d_{F'}(w)\geq \frac{e(F)}{6|W|}$. Since $$\frac{e(F)}{2}\leq e(F')\leq |W'|\max_{w\in W'} d_{F'}(w)\leq |W'|\max_{w\in W} d_{F}(w)\leq  \frac{|W'|2\lambda^3 e(F)}{|W|},$$ we have $\frac{|W'|}{|W|}\geq \frac{1}{4\lambda^3}$.
    
    Assume, without loss of generality, that (at the end of the above process) $|A'|\leq |B'|\leq |C'|$. Let $X=A'$, let $Y=B'$, and let $Z$ be a uniformly random subset of $C'$ of size $|B'|$. Let $H$ be the induced subhypergraph of $F'$ with parts $X$, $Y$ and $Z$. Note that $$e(H)\geq |Z|\min_{w\in C'} d_{F'}(w)\geq \frac{|Z|e(F)}{6|C|}\geq \frac{|Z|e(G)}{27\lambda^3 |C|}\geq \frac{|Z|e(G)}{27\lambda^3 n}\geq \frac{|Z|d}{27\lambda^3}.$$ Since $|V(H)|\leq |X|+|Y|+|Z|\leq 3|Z|$, it follows that $H$ has at least $|V(H)|d/(81\lambda^3)$ edges.
    
    It remains to verify that $H$ is $96\lambda^6$-balanced. First, note that $$\max_{z\in Z} d_H(z)\leq \max_{z\in C} d_F(z)\leq \frac{2\lambda^3 e(F)}{|C|}.$$ On the other hand, $e(H)\geq \frac{|Z|e(F)}{6 |C|}$, so $\max_{z\in Z} d_H(z)\leq \frac{12\lambda^3 e(H)}{|Z|}$.
    
    Now let $W\in \{X,Y\}$ and let $w\in W$. Note that $d_{F'}(w)\leq d_F(w)\leq \frac{2\lambda^3 e(F)}{|W|}$, so the expected value of $d_H(w)$ is at most $\frac{|Z|}{|C'|}\cdot \frac{2\lambda^3 e(F)}{|W|}$. Using $\frac{|C'|}{|C|}\geq \frac{1}{4\lambda^3}$ and $e(H)\geq \frac{|Z|e(F)}{6|C|}$, we can conclude that $$\mathbb{E}(d_H(w))\leq \frac{48\lambda^6 e(H)}{|W|}.$$ Note that $e(H)\geq \frac{|Z|d}{27\lambda^3}\geq \frac{|W|d}{27\lambda^3}$, so $\frac{\lambda^6 e(H)}{|W|}\geq \frac{1}{27}\lambda^3 d\geq \frac{1}{27}(\log n)^3$.
    
    Since $G$ is linear, $d_{H}(w)$ is the sum of independent indicator random variables, so by Lemma~\ref{lemma:chernoff}, we have  
    $$\mathbb{P}\left(d_H(w)\geq \frac{96\lambda^6 e(H)}{|W|}\right)\leq e^{-\frac{16\lambda^6 e(H)}{|W|}}< e^{-2\log n}=\frac{1}{n^2}.$$
    Therefore, with probability greater than $1/2$, for every $w\in X$, we have $d_H(w)\leq \frac{96\lambda^6 e(H)}{|X|}$. It follows by symmetry that with probability greater than $1/2$, we have $d_H(w)\leq \frac{96\lambda^6 e(H)}{|Y|}$ for every $w\in Y$. Thus, with positive probability $H$ is $96\lambda^6$-balanced.
\end{proof}

Finally, with slight abuse of notation, we define the notion $\mu$-balanced for edge coloured graphs as well.

\begin{definition}
Say that an edge coloured graph $G$ on $n$ vertices with average degree $d$ is \emph{$\mu$-balanced} if $\Delta(G)\leq \mu d$, there are $s\leq n$ colours appearing on the edges of $G$, and each colour is used at most $nd\mu/s$ times. 
\end{definition}

\section{$2$-regular subhypergraphs}\label{sect:2-reg}

In this section, we prove Theorem \ref{thm:2-reg}. As this is the most involved part of our paper, let us give a brief outline.

A \emph{homomorphic copy} of a graph $H$ in a graph $G$ is a function $\phi: V(H)\rightarrow V(G)$ that maps edges to edges. Let $\hom(H,G)$ denote the number of homomorphic copies of $H$ in $G$. A \emph{labelled copy} of $H$ is an injective homomorphic copy.

Let $H$ be a linear 3-graph on $n$ vertices with average degree $d\geq \exp(C\sqrt{(\log n)(\log\log n)})$ for some large constant $C$. By Lemma \ref{lem:balanced regularization}, after possibly passing to a subhypergraph, we may assume that $H$ is $3$-partite and $\mu$-balanced with $\mu=(\log n)^{O(1)}$, and the vertex classes $X,Y,Z$ satisfy $|X|\leq |Y|=|Z|$. 

One can view $H$ as a properly edge coloured graph $G$. Indeed, let $G$ be the graph on vertex set $Y\cup Z$ such that for every $xyz\in E(H)$,  $yz$ is an edge of $G$ of colour $x$. Then $G$ is $\mu$-balanced. Our goal is to find a 2-regular subgraph of $G$ in which each colour is used exactly twice or zero times. We proceed different ways depending on the number homomorphic copies of the cycle $C_{2k}$, where $k$ depends on the number of colours $s=|X|$.

First, we consider the case where the number of homomorphic $2k$-cycles is much larger than the number of paths of length $k$, this can be found in Section \ref{sect:many_cycles}. In this case, we count collections $\cC$ of vertex-disjoint copies of $C_{2k}$ such that no colour is used more than once. If one can find two such collections $\cC$ and $\cC'$ that have the same set of colours, then the union of $\cC$ and $\cC'$ satisfies that every colour is used twice. However, some of the cycles in $\cC\cup \cC'$ might share a vertex, so it does not necessarily give a 2-regular subgraph. In order to avoid this, we add the additional condition that no cycle in $\cC$ can touch a colour that is used in another cycle. This (almost) ensures that if two cycles in $\cC$ and $\cC'$ share a vertex, then they coincide, so we can remove both of them, and we get the desired 2-regular subgraph in the end.

Second, we consider the case where the number of homomorphic $2k$-cycles is not much larger than the number of paths of length $k$, this can be found in Section \ref{sect:few_cycles}. The average degree condition gives an easy lower bound for the number of rainbow paths of length $k$. Since the number of homomorphic copies of $C_{2k}$ is not too large compared to the number of paths of length $k$, we can select a constant proportion of these rainbow paths in a way that between any two vertices there is at most one path selected. Let us call the set of selected paths $\mathcal{S}$. Now we count collections $\cP$ of vertex-disjoint paths in $\mathcal{S}$ such that no colour is used more than once. If one can find two such collections $\cP$ and $\cP'$ that have the same set of colours and the same set of endpoints, then the union of $\cP$ and $\cP'$ satisfies that every colour is used twice. However, some of the paths in $\cP\cup \cP'$ might share a vertex other than an endpoint, so $\cP\cup \cP'$ does not necessarily give a 2-regular subgraph. In order to fix this, we add the extra condition that any two paths in $\cP$ must have distance greater than $k-1$.  This ensures that if two paths in $\cP\cup \cP'$ share an internal vertex, then they have the same endpoints, so (as they both belong to $\mathcal{S}$), they coincide. Hence, we can remove both of them, and we get the desired 2-regular subgraph in the end.

\subsection{Many homomorphic cycles}\label{sect:many_cycles}

 We make use of the following lemma, which is a straightforward consequence of Lemmas 2.1 and 2.2 from \cite{Jan20}.

\begin{lemma}[\cite{Jan20}]\label{lemma:hom}
Let $k\geq 2$ be an integer and let $G$ be a properly edge-coloured  graph on $n$ vertices. Then the number of homomorphic copies of the $2k$-cycle which are not rainbow labelled copies of $C_{2k}$ is at most
$$64k^{3/2}\Delta(G)^{1/2}n^{\frac{1}{2k}}\hom(C_{2k},G)^{1-\frac{1}{2k}}.$$
\end{lemma}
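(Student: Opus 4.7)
The plan is to bound the homomorphic copies of $C_{2k}$ that fail to be rainbow labelled copies by splitting them into two classes and invoking Lemmas~2.1 and 2.2 of~\cite{Jan20} separately. A homomorphism $\phi:C_{2k}\to G$ fails to be a rainbow labelled copy either because (i) $\phi$ is not injective on the vertices of $C_{2k}$, or (ii) $\phi$ is injective but the $2k$ edges $\phi(v_iv_{i+1})$ do not carry $2k$ distinct colours. I would bound the two counts separately and then sum, absorbing the resulting constants into the stated $64$.

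For class (i), Lemma~2.1 of~\cite{Jan20} directly bounds non-injective homomorphic $2k$-cycles in any graph. For class (ii), Lemma~2.2 of~\cite{Jan20} handles injective but non-rainbow homomorphisms; this is precisely where the \emph{proper} edge-colouring hypothesis is essential, since it forces any two equally coloured edges to be vertex-disjoint so that the forbidden configuration can be treated combinatorially rather than by crude degree bounds.

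The mechanism common to both of those lemmas, which I would briefly sketch, is as follows. Given a bad $\phi$, pick the ``shortest witness'' of failure: a pair of indices $i<j$ with $\ell:=j-i\le k$ minimal such that $\phi(v_i)=\phi(v_j)$ (class~(i)) or such that the colours at positions $i$ and $j$ agree (class~(ii)). Minimality of $\ell$ forces the length-$\ell$ sub-walk to be an injective path, so $\phi$ decomposes as a path of length $\ell$ together with a walk of length $2k-\ell$ sharing endpoints (plus, in class~(ii), a matched pair of equally coloured edges). Counting such decompositions by Cauchy--Schwarz against $\hom(C_{2k},G)$, combined with the power-mean convexity inequality
$$\hom(C_{2m},G)\le \hom(C_{2k},G)^{m/k}\,n^{1-m/k}\qquad(1\le m\le k)$$
to translate closed-walk counts of length $2(2k-\ell)$ back into closed-walk counts of length $2k$, yields exponent $1-\tfrac{1}{2k}$ on $\hom(C_{2k},G)$ and $\tfrac{1}{2k}$ on $n$. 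The factor $\sqrt{\Delta(G)}$ then arises from the single ``seam'' step where the path meets the walk: in the properly coloured setting each colour class is a matching, so extending by an edge of prescribed colour costs a factor $1$ and the free extension costs $\Delta(G)$, after which Cauchy--Schwarz turns this into $\sqrt{\Delta(G)}$.

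The main obstacle will be the bookkeeping of constants across the $O(k^2)$ choices of the witness pair $(i,j)$ and the $k$ possible values of $\ell$: each application of Cauchy--Schwarz costs a square root, and the $k$-powers appearing in the individual summands must be tracked carefully so that they combine to at most $k^{3/2}$. Since Lemmas~2.1 and~2.2 of~\cite{Jan20} are already stated in exactly this clean form, in practice I would simply quote them and combine by the triangle inequality, trusting the cited paper for the detailed constant-chasing.
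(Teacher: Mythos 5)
Your approach coincides with the paper's: the lemma is presented there as a straightforward consequence of Lemmas~2.1 and~2.2 of \cite{Jan20}, with no independent proof given, and the intended derivation is exactly your case split (non-injective homomorphisms handled by Lemma~2.1, injective-but-non-rainbow ones by Lemma~2.2), with the two bounds summed and absorbed into the constant $64$. Your additional sketch of the internal mechanics of the cited lemmas is not part of the paper's argument and is not needed; the paper simply defers to the source for those details.
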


\begin{definition} \label{def:nice cycles}
    In a graph $G$, we call a sequence of cycles $\cC_1,\dots,\cC_t$ \emph{nice} if for every $i\neq j$, no vertex in $\cC_i$ is incident to any edge in $G$ whose colour appears on $\cC_j$.
\end{definition}

\begin{remark}
    Note that in particular this means that $\cC_1,\dots,\cC_t$ are pairwise vertex-disjoint and no colour appears on more than one of the cycles.
\end{remark}

\begin{lemma} \label{lem:find cycles}
	Let $G$ be a properly edge-coloured $\mu$-balanced  graph  on $n$ vertices of average degree $d$ and let
	$s$ be the number of colours in this edge-colouring. Let $k\geq 2$ be an integer and assume that in every subgraph $G'$ of $G$ with at least $e(G)/2$ edges, we have $\hom(C_{2k},G')\geq (128k^{3/2}\mu^{1/2})^{2k}nd^{k}$. Then, for $t=\lceil\frac{s}{32d\mu^2 k}\rceil$, $G$ has at least $(nd^{k})^t$ nice sequences of labelled rainbow $2k$-cycles $\cC_1,\dots,\cC_t$.
\end{lemma}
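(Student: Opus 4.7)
The plan is to construct the sequence greedily, cycle by cycle, and to show that once $\cC_1,\dots,\cC_{i-1}$ have been fixed there are still at least $nd^{k}$ legal choices for $\cC_i$. Call a vertex \emph{forbidden} if it is incident to an edge whose colour appears on some $\cC_j$ with $j<i$, and a colour \emph{forbidden} if it appears on some edge incident to a vertex of some $\cC_j$ with $j<i$. Let $G_{i-1}$ be the subgraph of $G$ obtained by deleting every forbidden vertex and every edge of a forbidden colour. Any labelled rainbow $2k$-cycle $\cC_i$ sitting inside $G_{i-1}$ automatically extends the sequence so that niceness is preserved, because then no vertex of $\cC_i$ is incident to a colour of any earlier $\cC_j$ and, symmetrically, no colour of $\cC_i$ is incident to any vertex of any earlier $\cC_j$.

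The next step is to bound $e(G)-e(G_{i-1})$. By $\mu$-balancedness each colour is used on at most $nd\mu/s$ edges of $G$, and since the colouring is proper these edges are vertex-disjoint, so each colour touches at most $2nd\mu/s$ vertices. Summing across the $2k$ colours of each of the $i-1$ earlier cycles gives at most $4(i-1)knd\mu/s$ forbidden vertices, and since every vertex has degree $\leq\mu d$, deleting them costs at most $4(i-1)knd^{2}\mu^{2}/s$ edges. Similarly each earlier cycle has $2k$ vertices of degree $\leq\mu d$, contributing at most $2k\mu d$ forbidden colours; removing all edges of these colours costs at most $2(i-1)knd^{2}\mu^{2}/s$ further edges. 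In total at most $6(i-1)knd^{2}\mu^{2}/s$ edges are lost, and since $i-1\leq t-1\leq s/(32kd\mu^{2})$, this is at most $3nd/16<nd/4=e(G)/2$. Hence $e(G_{i-1})\geq e(G)/2$.

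The hypothesis of the lemma then yields $\hom(C_{2k},G_{i-1})\geq (128k^{3/2}\mu^{1/2})^{2k}nd^{k}$. I would apply Lemma \ref{lemma:hom} to $G_{i-1}$, using $\Delta(G_{i-1})\leq \mu d$: the number of homomorphic $2k$-cycles in $G_{i-1}$ that are not rainbow labelled copies is at most $64k^{3/2}(\mu d)^{1/2}n^{1/(2k)}\hom(C_{2k},G_{i-1})^{1-1/(2k)}$, which is at most $\tfrac12\hom(C_{2k},G_{i-1})$ exactly when $\hom(C_{2k},G_{i-1})\geq (128k^{3/2}\mu^{1/2})^{2k}nd^{k}$, i.e.\ precisely the bound just obtained. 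Thus at least $\tfrac12(128k^{3/2}\mu^{1/2})^{2k}nd^{k}\geq nd^{k}$ labelled rainbow $2k$-cycles live in $G_{i-1}$ and each is a valid choice for $\cC_i$. Multiplying the number of choices over $i=1,\dots,t$ produces at least $(nd^{k})^{t}$ nice sequences.

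The main obstacle is choosing the notion of ``forbidden'' correctly, so that Definition \ref{def:nice cycles} — whose condition is symmetric in $i$ and $j$ — is automatically maintained when the next cycle is appended. Once one notices that it suffices to forbid both vertices touched by any previously used colour and edges of any colour touching a previously used vertex, the edge-count control is routine because $\mu$-balancedness simultaneously bounds the number of edges per colour and the number of colours per vertex.
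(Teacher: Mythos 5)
Your proposal is correct and follows essentially the same greedy approach as the paper: delete vertices touched by previously used colours and edges whose colour touches a previously used vertex, check that at most half the edges are lost, and apply Lemma \ref{lemma:hom} to see that a majority of the remaining homomorphic $2k$-cycles are rainbow labelled copies. The arithmetic and constants line up with the paper's argument up to trivial rearrangement.
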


\begin{proof}
	We prove the lemma by showing that once we have found suitable cycles $\cC_1,\dots,\cC_\ell$ (for some $\ell<t$), the number of suitable choices for $\cC_{\ell+1}$ is at least $nd^{k}$. Let $G'$ be the subgraph obtained from $G$ by removing
	\begin{enumerate}
		\item all vertices that are incident to any edge whose colour appears on $\cC_i$ for some $i\leq \ell$ (in particular we remove all vertices appearing in some $\cC_i$)
		\item and all edges whose colour is incident to a vertex in $\cC_i$ for some $i\leq \ell$.
	\end{enumerate}
	Observe that any rainbow $2k$-cycle in $G'$ is a suitable choice for $\cC_{\ell+1}$.
	
	Let us estimate how many edges we have removed from $G$ to get $G'$. The first kind of removal deletes at most $2nd\mu/s$ vertices for each colour that appears on $\cC_1,\dots,\cC_\ell$; this gives at most $$2k\ell\cdot \frac{2nd\mu}{s}\leq 2k(t-1)\cdot \frac{2nd\mu}{s}\leq \frac{n}{8\mu}$$ vertices in total. Each vertex is incident to at most $d\mu$ edges, so the first kind of removal deletes at most $nd/8= e(G)/4$ edges from $G$.
	
	There are $2k\ell$ vertices appearing on one of $\cC_1,\dots,\cC_\ell$, and in total these vertices are incident to at most $2k\ell\cdot d\mu $ edges. Since any colour appears at most $nd\mu/s$ times in the graph, the second kind of removal deletes at most $$2k\ell\cdot d\mu\cdot \frac{nd\mu}{s}\leq \frac{2k(t-1)nd^2\mu^2}{s}< \frac{nd}{8}= \frac{e(G)}{4}$$ edges.
	
	Hence, $e(G')\geq e(G)-2e(G)/4= e(G)/2$.  
	By Lemma \ref{lemma:hom}, the number of homomorphic copies of the $2k$-cycle in $G'$ which are not rainbow labelled copies is at most $$64k^{3/2}\Delta(G)^{1/2}n^{\frac{1}{2k}}\hom(C_{2k},G')^{1-\frac{1}{2k}}.$$ If the inequality $\hom(C_{2k},G')\geq (128k^{3/2})^{2k}n\Delta(G)^k$ is satisfied, then the previous expression is at most $\frac{1}{2}\hom(C_{2k},G')$, implying that there are at least $\frac{1}{2}\hom(C_{2k},G')$ labelled rainbow $2k$-cycles in $G'$. But noting that $\Delta(G)\leq d\mu$ and $e(G')\geq e(G)/2$, it is indeed satisfied by the conditions of the lemma. Thus, $G'$ has at least $\frac{1}{2}\hom(C_{2k},G')\geq\frac{1}{2}(128k^{3/2}\mu^{1/2})^{2k}nd^{k}> nd^{k}$ labelled rainbow $2k$-cycles. This completes the proof.
\end{proof}

\begin{lemma} \label{lem:two cycles}
	Let $G$ be a properly edge-coloured graph. Let $A_1\cup \dots \cup A_{2k}$ be a partition of the vertex set and let $B_1\cup \dots \cup B_{2k}$ be a partition of the colour set. Let $\cC_1,\dots,\cC_t$ be a nice sequence of labelled rainbow $2k$-cycles such that for any $i,\ell$, the $\ell$th vertex of $\cC_i$ is in $A_\ell$ and the colour of the $\ell$th edge of $\cC_i$ is in $B_\ell$. Let $\cC'_1,\dots,\cC'_t$ be another sequence with the analogous property. Assume, finally, that the two sequences of cycles use the same set of colours. Then if some vertex belongs to both $\cC_i$ and $\cC'_j$, then $\cC_i=\cC'_j$.
\end{lemma}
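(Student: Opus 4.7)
I would begin by using the partition hypothesis: since the sets $A_1,\dots,A_{2k}$ are disjoint, the shared vertex $v\in \cC_i\cap\cC'_j$ lies in a unique class $A_\ell$, so $v$ must be the $\ell$th vertex of both $\cC_i$ and $\cC'_j$. The goal is then to show that the $\ell$th edges of the two cycles coincide, after which a walk around the cycle will propagate equality everywhere.

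Let $c\in B_\ell$ be the colour of the $\ell$th edge of $\cC_i$. Because the two sequences use the same colour set, and because the sequence $\cC'_1,\dots,\cC'_t$ is nice (so no colour is used on two different $\cC'_{j'}$), there is a unique index $j'$ with $c$ appearing on $\cC'_{j'}$. The crucial step is to argue $j'=j$: the edge of colour $c$ through $v$ belongs to $G$ and $v\in \cC'_j$, so if $j'\neq j$, the niceness of $\cC'_1,\dots,\cC'_t$ is violated, since a vertex of $\cC'_j$ would then be incident to an edge of $G$ whose colour appears on $\cC'_{j'}$. Hence $c$ is a colour of $\cC'_j$, and since $c\in B_\ell$ and each rainbow labelled cycle contains exactly one edge with colour in $B_\ell$ (namely its $\ell$th edge), $c$ must be the colour of the $\ell$th edge of $\cC'_j$.

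At this point the $\ell$th edges of $\cC_i$ and of $\cC'_j$ share both the colour $c$ and the endpoint $v$. A proper edge-colouring has at most one edge of colour $c$ incident to $v$, so these two edges are in fact the same edge. Consequently their other endpoints agree, which means the $(\ell+1)$st vertices of $\cC_i$ and of $\cC'_j$ coincide (both lying in $A_{\ell+1}$, as required by the partition hypothesis).

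Finally, I would iterate: replacing $v$ by the $(\ell+1)$st vertex and repeating the argument above shows that the $(\ell+1)$st edges also coincide, then that the $(\ell+2)$nd vertices coincide, and so on around the cycle (indices mod $2k$). After $2k$ steps every vertex and every edge of $\cC_i$ matches the corresponding one of $\cC'_j$, so $\cC_i=\cC'_j$. The main conceptual point is identifying the correct application of niceness (of the opposite sequence) that forces $j'=j$; once that is in place, the rest is a routine propagation relying on properness of the colouring together with the fact that, in a rainbow cycle indexed by the $B_\ell$'s, the colour class $B_\ell$ fixes which edge is being discussed.
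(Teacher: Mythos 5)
Your proof is correct and follows essentially the same route as the paper: identify the common position $\ell$ of the shared vertex via the $A_\ell$-partition, use niceness of the primed sequence to force the colour of the $\ell$th edge of $\cC_i$ to lie on $\cC'_j$ itself, use the $B_\ell$-partition to pin it to the $\ell$th edge of $\cC'_j$, then invoke properness to match the next vertex, and iterate around the cycle. The only difference is cosmetic: the paper normalizes to $\ell=1$ by relabelling and phrases the iteration as an induction, whereas you keep $\ell$ general.
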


\begin{proof}
	Let $u$ be a vertex which belongs to both $\cC_i$ and $\cC'_j$. Without loss of generality, assume that $u\in A_1$. We prove by induction that for all $1\leq \ell\leq 2k$, the $\ell$th vertex of $\cC_i$ is the same as the $\ell$th vertex of $\cC'_j$. The case $\ell=1$ is given. Let $v_p$ and $w_p$ be the $p$th vertex of $\cC_i$ and $\cC'_j$, respectively. Assume that we already know that $v_\ell=w_\ell$, for some $1\leq \ell\leq 2k-1$. Let us call the colour of the edge $v_\ell v_{\ell+1}$ red. By assumption, the colour red also appears on some edge in some $\cC'_q$. But then $q=j$, otherwise the colour red is both incident to a vertex of $\cC'_{j}$, and is contained in $\cC'_q$, contradicting the definition of nice. Therefore, the colour red appears on $\cC'_j$. However, since $v_{\ell} v_{\ell+1}$ is red, the colour red belongs to $B_\ell$. Hence, the edge $w_{\ell} w_{\ell+1}$ must be the red one in $\cC'_j$. Thus, $w_{\ell+1}=v_{\ell+1}$ as $G$ is properly coloured. This completes the induction step.
\end{proof}

\begin{lemma} \label{lem:2-reg with cycles}
	Let $G$ be a properly edge-coloured $\mu$-balanced graph with at most $n$ vertices and average degree $d\geq 2$, and let
	$s$ be the number of colours in this edge-colouring. Let $k\geq 2$ be an integer for which $s\leq nd^{-k+1}(100\mu k)^{-4k}$.  Assume that in every subgraph $G'$ of $G$ with at least $e(G)/2$ edges, we have $\hom(C_{2k},G')\geq (128k^{3/2}\mu^{1/2})^{2k}nd^{k}$. Then, $G$ has a 2-regular subgraph in which every colour appears exactly twice or zero times.
\end{lemma}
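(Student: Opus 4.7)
The plan is to find two nice sequences of labelled rainbow $2k$-cycles with the same set of colours but distinct underlying unordered collections, and then combine them into the required 2-regular subgraph.

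First, I would apply Lemma \ref{lem:find cycles} with $t=\lceil s/(32 d \mu^2 k)\rceil$ to obtain at least $(n d^k)^t$ nice sequences of labelled rainbow $2k$-cycles. Next, I would sample uniformly at random a partition $V(G)=A_1\cup\dots\cup A_{2k}$ of the vertex set and a partition of the colour set into $B_1\cup\dots\cup B_{2k}$, placing each vertex (respectively, colour) independently and uniformly in one of the $2k$ parts. Call a nice sequence \emph{compatible} if, for every cycle $\cC_i$ in it and every $\ell$, the $\ell$-th vertex of $\cC_i$ lies in $A_\ell$ and the colour of the $\ell$-th edge of $\cC_i$ lies in $B_\ell$---this is precisely the hypothesis of Lemma \ref{lem:two cycles}. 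Because a nice sequence involves $2kt$ distinct vertices and $2kt$ distinct colours, each sequence is compatible with probability $(2k)^{-4kt}$, so in expectation there are at least $N:=(n d^k/(2k)^{4k})^t$ compatible sequences; I fix partitions achieving this.

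Since cycles in a nice sequence are pairwise vertex-disjoint (hence distinct as labelled cycles), each unordered collection of compatible cycles gives rise to exactly $t!$ compatible ordered sequences, so there are at least $N/t!$ distinct compatible unordered collections. On the other hand, each such collection uses a set of exactly $2kt$ colours, so the number of possible colour sets is at most $\binom{s}{2kt}\leq (es/(2kt))^{2kt}\leq (16 e d \mu^2)^{2kt}$, where I used $s/(2kt)\leq 16 d\mu^2$ from the choice of $t$. A routine calculation---distinguishing the cases $t=1$ and $t\geq 2$---shows that the hypothesis $s\leq n d^{-(k-1)}(100\mu k)^{-4k}$ is exactly what is needed to guarantee $N/t!>\binom{s}{2kt}$; this is where the specific constants in the statement are used, and verifying it is the main technical step of the proof. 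By pigeonhole, there then exist two distinct compatible unordered collections $\cC$ and $\cC'$ sharing the same colour set.

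Finally, I would combine the two collections. Delete from $\cC$ and from $\cC'$ every cycle that appears in both, and let $\cD,\cD'$ denote what remains; since $\cC\neq\cC'$ we have $\cD,\cD'\neq\emptyset$, and since cycles were removed in pairs the two remaining collections use the same set of colours. By Lemma \ref{lem:two cycles}, any vertex shared between a cycle of $\cD$ and a cycle of $\cD'$ would force those two cycles to coincide, contradicting the deletion step; combined with the vertex-disjointness within each collection guaranteed by niceness, this shows that the cycles of $\cD\cup\cD'$ are pairwise vertex-disjoint. Their union is therefore a 2-regular subgraph of $G$, and since within each of $\cD$ and $\cD'$ different cycles have disjoint colour sets (by niceness) while $\cD$ and $\cD'$ share the same total colour set, every colour that appears in this subgraph does so exactly twice, as required.
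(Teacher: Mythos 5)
Your proposal is correct and follows essentially the same route as the paper's proof: apply Lemma~\ref{lem:find cycles} with the same $t$, use a random vertex-partition and colour-partition to isolate compatible sequences, compare their count to $t!\binom{s}{2kt}$ via the bound $s/(2kt)\leq 16d\mu^2$, and finally use Lemma~\ref{lem:two cycles} to show the symmetric difference of two collections with the same colour set is the desired $2$-regular, colour-doubled subgraph. The only presentational differences are counting unordered collections instead of ordered sequences (equivalent) and spelling out the final combination step slightly more than the paper does.
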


\begin{proof}
	Let $t=\lceil\frac{s}{32d\mu^2 k}\rceil$. By Lemma \ref{lem:find cycles}, there are at least $(nd^{k})^t$ nice sequences of labelled rainbow $2k$-cycles $\cC_1,\dots,\cC_t$. Let $A_1\cup \dots \cup A_{2k}$ be a uniformly random partition of the vertex set and let $B_1\cup \dots \cup B_{2k}$ be a uniformly random partition of the colour set. The probability that a given nice sequence $\cC_1,\dots,\cC_t$ of rainbow $2k$-cycles satisfies that for any $i,j$, the $j$th vertex of $\cC_i$ is in $A_j$ and the colour of the $j$th edge of $\cC_i$ is in $B_j$ is precisely $(2k)^{-4kt}$. Hence, for some choices of $A_1\cup \dots \cup A_{2k}$ and $B_1\cup \dots \cup B_{2k}$, there are at least $(2k)^{-4kt}(nd^{k})^t$ nice sequences of labelled rainbow $2k$-cycles $\cC_1,\dots,\cC_t$ such that for any $i,j$, the $j$th vertex of $\cC_i$ is in $A_j$ and the colour of the $j$th edge of $\cC_i$ is in $B_j$.
	
	We claim that $(2k)^{-4kt}(nd^{k})^{t}>t!\cdot \binom{s}{2kt}$. Indeed, $t!\cdot \binom{s}{2kt}\leq t^t\cdot (\frac{es}{2kt})^{2kt}$, so it suffices to prove that $(2k)^{-4k}nd^{k}>t(\frac{es}{2kt})^{2k}$. But using $t\geq \frac{s}{32d\mu^2 k}$ and $s\leq nd^{-k+1}(100\mu k)^{-4k}$, we obtain that $$t\left(\frac{es}{2kt}\right)^{2k}\leq sd^{-1}(16ed\mu^2)^{2k}\leq nd^{-k}(100\mu k)^{-4k} (16ed\mu^2)^{2k} < (2k)^{-4k}nd^k.$$
	Hence, indeed $(2k)^{-4kt}(nd^{k})^t>t!\cdot \binom{s}{2kt}$.
	
	It follows that there is some set of $2kt$ colours $S$ for which there are more than $t!$ nice sequences of labelled rainbow $2k$-cycles $\cC_1,\dots,\cC_t$ whose colour set is precisely $S$ and such that for any $i,j$, the $j$th vertex of $\cC_i$ is in $A_j$ and the colour of the $j$th edge of $\cC_i$ is in $B_j$. Then clearly there are two such collections which are not the same set of cycles up to reordering. By Lemma~\ref{lem:two cycles}, their symmetric difference is a (non-empty) $2$-regular graph in which every colour appears exactly twice or zero times.
\end{proof}

\subsection{Few homomorphic cycles}\label{sect:few_cycles}

\begin{lemma}\label{lemma:rainbow_paths}
Let $G$ be a properly edge coloured graph with $n$ vertices and at least $nd$ edges, and let $1\leq k<d/8$. Then $G$ contains at least $n(d/4)^{k}$ labelled  rainbow paths of length $k$.
\end{lemma}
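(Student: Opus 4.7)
The plan is a two-step greedy argument. First, clean $G$ by iteratively deleting low-degree vertices to obtain a subgraph $G'$ with minimum degree at least $d/2$; then count labelled rainbow paths of length $k$ in $G'$ by fixing a starting directed edge and extending one vertex at a time.

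For the cleaning, I would iteratively remove any vertex whose current degree is strictly less than $d/2$, stopping when no such vertex remains. Each deletion removes fewer than $d/2$ edges, so fewer than $nd/2$ edges are removed in total. This leaves a subgraph $G'\subseteq G$ with $e(G')>nd/2$ and minimum degree at least $d/2$; in particular $G'$ contains more than $nd$ directed edges.

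For the counting, I would first choose a directed edge $(v_0,v_1)$ of $G'$ (more than $nd$ choices), and then iteratively choose $v_2,\dots,v_k$. When extending a partial rainbow path $v_0,\dots,v_i$ to $v_0,\dots,v_{i+1}$, the new vertex must be a neighbour of $v_i$ in $G'$ avoiding the $i$ already-used vertices $v_0,\dots,v_{i-1}$ and the $i$ already-used colours. Since the colouring is proper, each forbidden colour rules out at most one edge at $v_i$, so at most $2i$ of the $\geq d/2$ edges at $v_i$ are unusable. This leaves at least $d/2-2i$ valid extensions, and the hypothesis $k<d/8$ forces $d/2-2i>d/4$ for every $i\le k-1$. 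Multiplying, the number of labelled rainbow paths of length $k$ in $G'\subseteq G$ is at least $2e(G')\cdot(d/4)^{k-1}>nd\cdot(d/4)^{k-1}=4n(d/4)^k$, better than required by a factor of $4$.

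The only mildly delicate point is the bookkeeping in the cleaning: we need $e(G)\geq nd$ to absorb the loss of up to $nd/2$ edges and still retain $2e(G')>nd$ starting directed edges. Beyond this, the argument is routine greedy counting, and the factor of $4$ of slack in the final estimate comfortably absorbs all imprecision (including the gap between $d/2-2i$ and $d/4$).
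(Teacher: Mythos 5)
Your proof is correct and takes essentially the same approach as the paper: reduce to a subgraph of minimum degree at least $d/2$ and then greedily extend rainbow paths, using $k<d/8$ to guarantee at least $d/4$ valid extensions at each step. The paper performs the reduction by removing one low-degree vertex and invoking an induction with the adjusted density $d'=\frac{nd-d/2}{n-1}$ so that the bound $n(d/4)^k$ is preserved exactly, whereas you delete all low-degree vertices up front, bound the total edge loss by $nd/2$, and seed the greedy count from the $2e(G')>nd$ directed edges rather than from the $n$ vertices --- a cosmetic difference in the reduction step that your factor-of-$4$ slack comfortably absorbs.
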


\begin{proof}
In case $k=1$, the statement is trivial, so assume $k\geq 2$. We proceed by induction on $n$. If $n=1$, there is nothing to prove, so let us assume that $n\geq 2$. First, suppose that $G$ has a vertex $v$ of degree less than $d/2$. Let $G'$ be the graph we get after removing $v$ from $G$. Then $G'$ has at least $nd-d/2$ edges, so applying our induction hypothesis with $d'=\frac{nd-d/2}{n-1}$ instead of $d$, we get that $G'$ contains at least
\begin{align*}(n-1)\left(\frac{nd-d/2}{4(n-1)}\right)^{k}&=(n-1)\left(\frac{d}{4}\right)^{k}\left(1+\frac{1}{2(n-1)}\right)^{k}\\
&\geq (n-1)\left(\frac{d}{4}\right)^{k}\left(1+\frac{k}{2(n-1)}\right)\geq n\left(\frac{d}{4}\right)^{k}
\end{align*}
labelled rainbow paths of length $k$. Hence, in this case we are done. 

Therefore, we can assume that the minimum degree of $G$ is at least $d/2$. Starting from any vertex $v_0\in V(G)$, we can build a rainbow path with vertices $v_0,\dots,v_k$ using the following simple procedure. If $v_0,\dots,v_i$ are already selected, then $v_{i+1}$ is any neighbour of $v_{i}$ that is not in $\{v_0,\dots,v_{i}\}$, and such that $v_iv_{i+1}$ is of different colour than the previously selected edges. As $2k<d/4$ and the minimum degree of $G$ is at least $d/2$, at each step we have at least $d/4$ choices for $v_{i+1}$, giving at least $n(d/4)^{k}$ labelled rainbow paths. 
\end{proof}

\begin{definition}
In an edge-coloured graph $G$, a  sequence of labelled paths $\cP_1,\dots,\cP_t$ is \emph{$(k,q)$-nice} if 
\begin{itemize}
    \item $\cP_{i}$ has $k$ edges for all $i\in [t]$,
    \item each colour appears at most once on the union of the paths,
    \item for $1\leq i< j\leq t$, the distance between any vertex of $\cP_i$ and any vertex of $\cP_{j}$ in $G$ is greater than $k-1$ (in particular $\cP_i$ and $\cP_{j}$ are vertex-disjoint), and
    \item the number of homomorphic copies of $C_{2k}$ that extend $\cP_{i}$ is at most $q$ for all $i\in [t]$.
\end{itemize}
\end{definition}

\begin{lemma} \label{lem:find paths}
	Let $G$ be a properly edge-coloured  $\mu$-balanced graph with $n$ vertices and average degree $d\geq 2$ and let $s$ be the number of colours in this edge-colouring. Let $2\leq k< d/32$ be an integer such that  $s\leq  \frac{1}{4}nd^{-k+1}\mu^{-k+1}$. Let $\alpha>1$ and assume that $\hom(C_{2k},G)\leq \alpha nd^{k}$. Then, for $t=\lceil \frac{s}{8k\mu}\rceil$, there are at least $(n(d/32)^{k})^t$ sequences of labelled paths $\cP_1,\dots,\cP_t$ which are $(k,2\alpha 16^{k})$-nice.
\end{lemma}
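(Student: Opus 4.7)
The plan is to choose $\mathcal{P}_1,\dots,\mathcal{P}_t$ greedily and to show that, given any partial choice $\mathcal{P}_1,\dots,\mathcal{P}_\ell$ with $\ell<t$, the number of legal continuations $\mathcal{P}_{\ell+1}$ is at least $n(d/32)^k$. Multiplying across the $t$ steps will then yield the claimed lower bound $\bigl(n(d/32)^k\bigr)^t$ on the number of $(k,2\alpha\cdot 16^k)$-nice sequences.

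Suppose $\mathcal{P}_1,\dots,\mathcal{P}_\ell$ have been chosen. Let $C$ be the set of at most $\ell k$ colours already appearing on $\bigcup_{i\le\ell}\mathcal{P}_i$, and let $B$ be the set of vertices of $G$ lying within distance $k-1$ of $\bigcup_{i\le\ell}V(\mathcal{P}_i)$. Let $G^{\ast}$ denote the subgraph of $G$ obtained by deleting every vertex in $B$ together with every edge of colour in $C$. Any labelled rainbow path of length $k$ in $G^{\ast}$ whose number of homomorphic $C_{2k}$-extensions in $G$ is at most $q=2\alpha\cdot 16^k$ is automatically a valid choice for $\mathcal{P}_{\ell+1}$: its colours are new, it is vertex-disjoint from and at distance greater than $k-1$ (in $G$) from each previously chosen path, and it satisfies the required bound on $C_{2k}$-extensions.

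I would first lower-bound $e(G^{\ast})$. The $\mu$-balanced hypothesis gives $\Delta(G)\le d\mu$ and at most $nd\mu/s$ edges per colour, so the colour removal discards at most $\ell k\cdot nd\mu/s$ edges; the standard ball-growth estimate yields $|B|\le 2\ell(k+1)(d\mu)^{k-1}$, so the vertex removal discards at most $|B|\cdot d\mu\le 2\ell(k+1)(d\mu)^k$ edges. Substituting $\ell\le t-1\le s/(8k\mu)$ together with the hypothesis $s\le \tfrac14 nd^{-k+1}\mu^{-k+1}$, both quantities are small constant multiples of $nd$, so $e(G^{\ast})\ge nd/4$ and $n^{\ast}:=|V(G^{\ast})|\ge (1-o(1))n$. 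Since $k<d/32$, Lemma \ref{lemma:rainbow_paths} applied to $G^{\ast}$ (with parameter $d/4$) then produces at least roughly $n(d/16)^k$ labelled rainbow paths of length $k$. Finally, a direct double count gives $\sum_{P}e(P)\le \hom(C_{2k},G)\le\alpha nd^k$, where $P$ ranges over labelled length-$k$ paths of $G$ and $e(P)$ denotes the number of homomorphic $C_{2k}$'s extending $P$ at a fixed position; Markov's inequality then bounds the number of paths with $e(P)>q$ by at most $\alpha nd^k/q=\tfrac12 n(d/16)^k$. Subtracting, the number of valid continuations $\mathcal{P}_{\ell+1}$ is at least $\tfrac{29}{32}n(d/16)^k-\tfrac12 n(d/16)^k=\tfrac{13}{32}n(d/16)^k\ge n(d/16)^k/2^k=n(d/32)^k$, where the last inequality uses $k\ge 2$.

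The main obstacle is the tight book-keeping in the second paragraph: $|B|$ scales like $\ell(d\mu)^{k-1}$, so the cumulative vertex- and edge-removals over $t$ iterations must remain dominated by $e(G)=nd/2$, and this requires both hypotheses $k<d/32$ and $s\le \tfrac14 nd^{-k+1}\mu^{-k+1}$ to absorb the $d^{k-1}$, $\mu^{k-1}$, and $k$ factors. Together with the $2^k$ slack between $(d/16)^k$ and $(d/32)^k$ that compensates for the Markov loss, this is precisely what the chosen parameters $t=\lceil s/(8k\mu)\rceil$ and $q=2\alpha\cdot 16^k$ are calibrated to make work.
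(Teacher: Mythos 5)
Your argument is correct and follows essentially the same route as the paper: greedily count the valid continuations $\mathcal{P}_{\ell+1}$ by deleting the $(k-1)$-ball around the chosen paths and their colours, invoke Lemma~\ref{lemma:rainbow_paths} on the remaining graph to get $\geq n(d/16)^k$ rainbow length-$k$ paths, and discard those with too many $C_{2k}$-extensions via the double count $\sum_P e(P)\leq \hom(C_{2k},G)$. The only small wobbles are cosmetic: Lemma~\ref{lemma:rainbow_paths} applied to $G^{\ast}$ with its own vertex count $n'$ and average degree $d''=e(G^\ast)/n'\geq d/4$ yields cleanly $n'(d''/4)^k\geq n(d/16)^k$ (since $n'd''\geq nd/4$ and $d''\geq d/4$), so the $(1-o(1))$ hedging and the unexplained $\tfrac{29}{32}$ factor are unnecessary; the clean subtraction $n(d/16)^k-\tfrac12 n(d/16)^k=\tfrac12 n(d/16)^k> n(d/32)^k$ for $k\geq 2$ finishes it.
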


\begin{proof}
	Clearly, it suffices to prove that if $\ell<t$, then any $(k,2\alpha 16^k)$-nice sequence of paths $\cP_1,\dots,\cP_\ell$ extends to at least $n(d/32)^k$ sequences of paths $\cP_1,\dots,\cP_{\ell+1}$ which are $(k,2\alpha 16^k)$-nice.
	
	 Let $G'$ be the subgraph of $G$ obtained from $G$ by removing
	\begin{enumerate}
		\item all vertices within distance at most $k-1$ to some vertex of $\cP_i$ for some $i\leq \ell$
		\item and all edges whose colour appears on $\cP_i$ for some $i\leq \ell$.
	\end{enumerate}

	The first kind of removal deletes at most $$\ell(k+1)(1+\Delta(G)+\dots+\Delta(G)^{k-1})\leq 2(t-1)(k+1)(d\mu)^{k-1}$$ vertices and each such vertex is incident to at most $d\mu$ edges, so the first kind of removal deletes at most $$2(t-1)(k+1)(d\mu)^{k}\leq \frac{s}{4k\mu}(k+1)(d\mu)^{k}<\frac{nd}{8}= \frac{e(G)}{4}$$
	edges. 
	
	Since each colour appears at most $nd\mu/s$ times in the graph, the second kind of removal deletes at most $$\ell k \cdot\frac{nd\mu}{s}\leq (t-1)k\cdot \frac{nd\mu}{s}\leq \frac{nd}{8}= \frac{e(G)}{4}$$ edges.
	
	Hence, $e(G')\geq e(G)/2= nd/4$. By Lemma \ref{lemma:rainbow_paths},  $G'$ contains at least $n(d/16)^k$ labelled rainbow paths of length $k$. Since  $\hom(C_{2k},G)\leq \alpha nd^{k}$, it follows that $G'$ has at most $\alpha nd^{k}\cdot \frac{1}{2\alpha 16^k}=\frac{1}{2}n(d/16)^{k}$ labelled paths of length $k$ which extend to at least $2\alpha 16^{k}$ homomorphic $2k$-cycles. It follows that there are at least $\frac{1}{2}n(d/16)^{k}>n(d/32)^k$ labelled rainbow paths of length $k$ in $G'$ which extend to at most $2\alpha 16^{k}$ homomorphic cycles of length $2k$. Any such path can be chosen to be $\cP_{\ell+1}$, completing the proof.
\end{proof}

\begin{lemma} \label{lem:two paths}
	Let $\cP_1,\dots,\cP_t$ and $\cP'_1,\dots,\cP'_t$ be two sequences of $(k,q)$-nice paths such that the set of endpoints of the two sequences of paths are the same. Assume that some vertex $v$ is an internal vertex of both $\cP_i$ and $\cP'_j$. Then the endpoints of $\cP_i$ and $\cP'_j$ are the same.
\end{lemma}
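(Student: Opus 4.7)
The plan is to argue purely from the distance condition in the definition of $(k,q)$-niceness, without needing the colouring or the $C_{2k}$-extension bound at all. The key observation is that if $v$ is internal to a length-$k$ path $\cP_i$, then the distance in $G$ from $v$ to either endpoint of $\cP_i$ is at most $k-1$.

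Let $u$ be one of the two endpoints of $\cP_i$. Because $v$ is an internal vertex of $\cP_i$, the subpath of $\cP_i$ from $v$ to $u$ has length at most $k-1$, so $d_G(u,v)\leq k-1$. By hypothesis, the set of endpoints of $\cP_1,\dots,\cP_t$ equals the set of endpoints of $\cP'_1,\dots,\cP'_t$, so $u$ is an endpoint of some $\cP'_{j'}$. Now $v$ is a vertex of $\cP'_j$, and if $j'\neq j$, then $u\in V(\cP'_{j'})$ and $v\in V(\cP'_j)$ would witness a pair of vertices from distinct paths of the primed sequence at $G$-distance at most $k-1$. This contradicts the third bullet in the definition of $(k,q)$-nice. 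Hence $j'=j$, and $u$ is an endpoint of $\cP'_j$.

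Applying the same argument to the other endpoint of $\cP_i$ shows that both endpoints of $\cP_i$ are endpoints of $\cP'_j$. Since $k\geq 2$ the two endpoints of $\cP_i$ are distinct, and $\cP'_j$ also has exactly two (distinct) endpoints, so the two endpoint sets must coincide, as required.

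There is no real obstacle here; the whole statement is a one-line consequence of the fact that distinct paths in a $(k,q)$-nice sequence are $G$-distance $>k-1$ apart, combined with the trivial observation that an internal vertex of a length-$k$ path is within $G$-distance $k-1$ of each endpoint. The colouring and the extension bound $q$ play no role in this particular lemma; they will presumably be used when these nice path sequences are combined in the subsequent argument of Section \ref{sect:few_cycles}.
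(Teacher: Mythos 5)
Your proof is correct and takes essentially the same approach as the paper. The paper argues by contradiction using that $v$ is internal to $\cP'_j$ and a shared endpoint then violates the distance condition in the \emph{unprimed} sequence, whereas you use that $v$ is internal to $\cP_i$ and get the violation in the \emph{primed} sequence; by the symmetry of the hypotheses these are the same argument with the roles of the two sequences swapped.
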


\begin{proof}
	Suppose this is not the case, then there exists some $i'\neq i$ such that $\cP_{i'}$ shares an endpoint with $\cP'_j$. Since $v$ is an internal vertex of $\cP'_j$, $v$ has distance at most $k-1$ from  an endpoint of $\cP_{i'}$, contradicting that $\cP_1,\dots,\cP_t$ is $(k,q)$-nice.
\end{proof}

\begin{lemma} \label{lem:2-reg with paths}
	Let $G$ be a  properly edge-coloured $\mu$-balanced graph with  $n$ vertices and average degree $d$. Let $\alpha>1$, let $s$ be the number of colours, and suppose that for some integer $2\leq k<d/32$, we have $10^{6k}\alpha \mu^{k+1} nd^{-k}\leq s\leq \frac{1}{4}nd^{-k+1}\mu^{-k+1}$. Assume that $\hom(C_{2k},G)\leq \alpha nd^{k}$. Then $G$ has a 2-regular subgraph in which every colour appears exactly twice or zero times.
\end{lemma}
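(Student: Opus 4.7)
My plan follows the outline given in the introduction. First, I will construct a set $\mathcal{S}$ of labelled rainbow paths of length $k$ such that distinct paths in $\mathcal{S}$ have distinct endpoint pairs, with $|\mathcal{S}|\geq \Omega_k(nd^k/\alpha)$. By Lemma~\ref{lemma:rainbow_paths}, $G$ has at least $n(d/4)^k$ labelled rainbow paths of length $k$; letting $N_{u,v}$ denote the number of such paths with endpoints $(u,v)$, the observation that any two distinct such paths glue to form a labelled homomorphic $2k$-cycle, combined with the hypothesis $\hom(C_{2k},G)\leq \alpha nd^k$, yields $\sum_{u,v} N_{u,v}^2 \leq O_k(\alpha nd^k)$. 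A Markov-type argument then extracts a collection of pairs $(u,v)$ with $1\leq N_{u,v}\leq O_k(\alpha)$ of size at least $\Omega_k(nd^k/\alpha)$, and choosing one path per such pair yields the desired $\mathcal{S}$.

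Next, I adapt the greedy argument of Lemma~\ref{lem:find paths}, restricting each chosen path to lie in $\mathcal{S}$, to produce at least $(c_k nd^k/\alpha)^t$ ordered $(k,2\alpha\cdot 16^k)$-nice sequences $(\cP_1,\dots,\cP_t)$ with $\cP_i\in\mathcal{S}$, for $t=\lceil s/(8k\mu)\rceil$; the edge-deletion accounting in that lemma carries through verbatim, the only change being that the greedy step now selects from $\mathcal{S}$ rather than from all rainbow paths. I then apply uniformly random partitions $V(G)=A_0\cup\cdots\cup A_k$ and $(\text{colours})=B_1\cup\cdots\cup B_k$, keeping only those sequences whose $\ell$-th vertex (respectively $\ell$-th edge-colour) of each path lies in $A_\ell$ (respectively $B_\ell$). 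Each retained sequence determines a structured datum consisting of an unordered set of $2t$ endpoints and an unordered set of $kt$ colours; the number of possible such data is at most $n^{2t}s^{kt}/(t!)^{k+2}$. A routine calculation, patterned on the one in the proof of Lemma~\ref{lem:2-reg with cycles} and using $t\geq s/(8k\mu)$ together with the hypothesis $s\geq 10^{6k}\alpha\mu^{k+1}nd^{-k}$, shows that the count of retained sequences strictly exceeds $t!$ times the number of possible data. Pigeonhole therefore yields two ordered sequences $(\cP_1,\dots,\cP_t)$ and $(\cP'_1,\dots,\cP'_t)$ realising the same datum but differing as unordered sets of paths.

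Finally, I assemble the $2$-regular subgraph from these two sequences. Form the bipartite multigraph $H$ on $\{\cP_i\}\cup\{\cP'_j\}$ with an edge for each shared endpoint; since the two endpoint sets agree, $H$ is $2$-regular and so a disjoint union of even cycles. Any double edge $\cP_i\sim\cP'_j$ corresponds to two paths in $\mathcal{S}$ sharing both endpoints, forcing $\cP_i=\cP'_j$ by the uniqueness property of $\mathcal{S}$; I remove all such pairs. In any remaining cycle of length at least $4$, consecutive paths share exactly one endpoint, and Lemma~\ref{lem:two paths} forbids them from sharing any internal vertex; because $k\geq 2$, a shared edge would also create a shared internal vertex, so the remaining paths are pairwise edge-disjoint. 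The union of the remaining paths is thus a simple subgraph of $G$ that is $2$-regular (endpoint vertices receive contribution $1+1$ from two consecutive paths in their $H$-cycle, internal vertices receive contribution $2$ from a single path) and in which every surviving colour appears on exactly two distinct edges; it is non-empty because the two sequences differ as sets. The step I expect to be the main obstacle is the construction of $\mathcal{S}$: balancing the competing requirements of largeness and injectivity on endpoint pairs demands careful use of the $\hom(C_{2k},G)$ bound. The pigeonhole inequality is then technical but straightforward, as the factor $10^{6k}$ in the hypothesis absorbs the losses from the random partition, the $\mu$-balanced factors, and the $\alpha$-dependence of $|\mathcal{S}|$.
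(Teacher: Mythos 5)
Your high-level plan matches the paper: bound the number of rainbow length-$k$ paths that have few $2k$-cycle extensions using the $\hom(C_{2k},G)$ hypothesis, build long sequences of well-separated such paths, pigeonhole on colour/endpoint sets, and take a symmetric difference. The Markov/Cauchy--Schwarz extraction of $\Omega_k(nd^k/\alpha)$ endpoint pairs with $1\le N_{u,v}\le O_k(\alpha)$ is correct, and the final assembly of the $2$-regular subgraph (though more elaborate than the paper's one-line symmetric-difference statement) is also sound, using Lemma~\ref{lem:two paths} exactly as intended. The random partitions $A_0\cup\dots\cup A_k$, $B_1\cup\dots\cup B_k$ you import from the cycles argument are not needed here --- the paper's Lemma~\ref{lem:two paths} requires no such partition structure, unlike Lemma~\ref{lem:two cycles} --- but they only cost you a constant factor.

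The genuine gap is the claim that you can fix $\mathcal{S}$ \emph{first} and then run the greedy construction of Lemma~\ref{lem:find paths} \emph{restricted to $\mathcal{S}$}, with ``the edge-deletion accounting carrying through verbatim.'' It does not carry through. At each greedy step the deleted set consists of up to $2(t-1)(k+1)(d\mu)^{k-1}$ vertices (plus the coloured edges), and each such vertex can lie on as many as $(k+1)(d\mu)^k$ paths of $\mathcal{S}$ even though $\mathcal{S}$ has one path per endpoint pair. Using $t\le s/(4k\mu)$ and $s\le \tfrac14 n d^{-k+1}\mu^{-k+1}$, the number of $\mathcal{S}$-paths that can be killed is on the order of $nd^k\mu^{k-1}$, which dwarfs $|\mathcal{S}|=\Theta_k(nd^k/\alpha)$ whenever $\alpha\mu^{k-1}\gg 1$ --- exactly the regime in which this lemma is applied from Lemma~\ref{lem:combine}, where $\mu$ is polylogarithmic and $\alpha$ grows like $\mu^{O(k)}$. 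The paper sidesteps this by reversing the order of operations: it first counts \emph{all} $(k,\beta)$-nice sequences via the greedy argument, obtaining $(n(d/32)^k)^t$ of them, and only afterwards introduces the random one-path-per-pair family $\mathcal{R}$; since the paths in a nice sequence have pairwise distinct endpoint pairs and at most $2\beta$ competitors each, a fixed sequence survives the restriction to $\mathcal{R}$ with probability at least $(2\beta)^{-t}$, and one averages over $\mathcal{R}$. That per-sequence probabilistic restriction, applied after the counting, is what a fixed-in-advance $\mathcal{S}$ cannot replicate without an additional argument that your $\mathcal{S}$ is ``spread out'' enough to survive the deletions --- and your construction of $\mathcal{S}$ gives no such control.
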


\begin{proof}
	Let $t=\lceil \frac{s}{8k\mu}\rceil$ and let $\beta=2\alpha 16^{k}$. Then by Lemma \ref{lem:find paths}, there are at least $(n(d/32)^{k})^t$ sequences of labelled paths $\cP_1,\dots,\cP_t$ which are $(k,\beta)$-nice. Now define a collection $\mathcal{R}$ of labelled paths of length $k$ as follows. For any two vertices $u,v$ between which there exists a labelled path of length $k$, choose one uniformly at random which will belong to $\mathcal{R}$ (with probability $1/2$ it starts at $u$ and ends at $v$, and with probability $1/2$ it starts at $v$ and ends at $u$). Now for any $(k,\beta)$-nice sequence of labelled paths $\cP_1,\dots,\cP_t$, the probability that $\cP_i$ belongs to $\mathcal{R}$ for every $1\leq i\leq t$ is at least $(2\beta)^{-t}$ since the number of labelled paths of length $k$ between the endpoints of any $\cP_i$ is at most $2\beta$. Thus, there is a choice for $\mathcal{R}$ for which there are at least $(\frac{n(d/32)^{k}}{2\beta})^t$ sequences of paths $\cP_1,\dots,\cP_t$ which are $(k,\beta)$-nice and satisfy that $\cP_i$ belongs to $\mathcal{R}$ for all $i$.
	
	We claim that $(\frac{n(d/32)^{k}}{2\beta})^t>t!\binom{n}{2t}\binom{s}{kt}$. Indeed,
	$t!\binom{n}{2t}\binom{s}{kt}\leq t^t(\frac{en}{2t})^{2t}(\frac{es}{kt})^{kt}$, so it suffices to prove that $\frac{n(d/32)^{k}}{2\beta}>t(\frac{en}{2t})^2(\frac{es}{kt})^k$. But since $t\geq s/(8k\mu)$ and $s\geq 10^{6k}\alpha \mu^{k+1} nd^{-k}$, we have
	$$t\left(\frac{en}{2t}\right)^2\left(\frac{es}{kt}\right)^k\leq \frac{s}{8k\mu}\left(\frac{4ek\mu n}{s}\right)^2(8e\mu)^{k}\leq \frac{16^{k} e^{k+2} k \mu^{k+1} n^2}{s}<n\left(\frac{d}{32}\right)^{k}\frac{1}{2\beta},$$ so indeed $(\frac{n(d/32)^{k}}{2\beta})^t>t!\binom{n}{2t}\binom{s}{kt}$.
	
	It follows that there exist a set $S$ of $kt$ colours and a set $T$ of $2t$ vertices such that there are more than $t!$ sequences of paths $\mathcal{P}_1,\dots,\mathcal{P}_t$ which are $(k,\beta)$-nice and satisfy that $\cP_i$ belongs to $\mathcal{R}$ for all $i$, the colour set of $\cP_1\cup \dots \cup \cP_t$ is $S$ and the set of endpoints in $\cP_1,\dots,\cP_t$ is $T$. Then clearly there are two such sequences which are not the same set of paths up to reordering, call them $\cP_1,\dots,\cP_t$ and $\cP'_1,\dots,\cP'_t$. By Lemma \ref{lem:two paths}, for any $i,j$, if $\cP_i$ and $\cP'_j$ share an internal vertex, then they have the same endpoints. But since they both belong to $\mathcal{R}$, they must in fact be the same. Hence, the symmetric difference of $\cP_1,\dots,\cP_t$ and $\cP'_1,\dots,\cP'_t$ is a $2$-regular graph on which every colour appears exactly twice or zero times. 
\end{proof}

\begin{lemma} \label{lem:matchings}
	Let $G$ be a  properly edge-coloured $\mu$-balanced graph on $n$ vertices with average degree $d$. Assume that there are $10^5 \mu^2 nd^{-1}\leq s \leq n$ colours. Then $G$ has a 2-regular subgraph in which every colour appears exactly twice or zero times.
\end{lemma}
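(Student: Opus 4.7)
The plan is to run the $k=1$ version of Lemma~\ref{lem:2-reg with paths}: a ``path of length $1$'' is just an edge, so a $(1,q)$-nice sequence becomes a matching with pairwise distinct colours, and the condition involving homomorphic copies of $C_{2k}=C_2$ is vacuous because $G$ is a simple graph. That is precisely why the $k=1$ case must be treated separately here, and it also makes the argument substantially cleaner.

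Set $t=\lceil s/(16\mu)\rceil$. I would first lower-bound the number of ordered sequences $(e_1,\dots,e_t)$ of edges of $G$ that form a matching with $t$ pairwise distinct colours by a greedy process: after selecting $e_1,\dots,e_{i-1}$, the number of forbidden edges is at most $2(i-1)\Delta(G)+(i-1)\cdot nd\mu/s\leq 2t\mu d+tnd\mu/s$, using that $G$ is $\mu$-balanced (so each vertex is on at most $\mu d$ edges and each colour is used at most $nd\mu/s$ times). Plugging $t\leq s/(16\mu)+1$ and combining $s\leq n$ with $s\geq 10^5\mu^2 n/d$, this sum is at most $nd/4$, so at least $nd/4$ valid edges remain at each step. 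Consequently there are at least $(nd/4)^t$ ordered sequences, hence at least $(nd/4)^t/t!$ unordered matchings with the required property.

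The next step is a pigeonhole on labels: every such matching determines a vertex set of size $2t$ and a colour set of size $t$, giving at most $\binom{n}{2t}\binom{s}{t}$ possible pairs. Using $t!\leq t^t$, $\binom{n}{2t}\leq (en/(2t))^{2t}$ and $\binom{s}{t}\leq (es/t)^t$, the desired inequality $(nd/4)^t/t!>\binom{n}{2t}\binom{s}{t}$ simplifies to $dt^2>e^3 ns$, which follows from $t\geq s/(16\mu)$ together with $s\geq 10^5\mu^2 n/d$. Therefore two distinct matchings $M_1\neq M_2$ share the same vertex set $T$ and colour set $S$.

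Finally, consider $H=M_1\triangle M_2$. Every $v\in T$ has degree $1$ in each $M_i$, hence degree $0$ or $2$ in $H$, and vertices outside $T$ are isolated in $H$; restricting to the non-isolated vertices yields a 2-regular subgraph, which is nonempty because $M_1\neq M_2$. Each colour in $S$ appears once in $M_1$ and once in $M_2$, so it contributes either $0$ or $2$ edges to $H$, exactly as required. The only genuine technicality is the numerical check in the pigeonhole step, but the hypothesis $s\geq 10^5\mu^2 n/d$ has been calibrated precisely for this, so no real obstacle is expected.
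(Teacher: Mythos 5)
Your proof is correct and matches the paper's argument essentially line for line: the same choice $t=\lceil s/(16\mu)\rceil$, the same greedy count of rainbow matchings giving $(nd/4)^t/t!$, the same pigeonhole against $\binom{n}{2t}\binom{s}{t}$ pairs (vertex set, colour set), and the same symmetric-difference conclusion. The only cosmetic difference is that the paper uses $t-1$ rather than $t$ in the greedy bound, which makes the arithmetic a touch cleaner, but your version with $t\le s/(16\mu)+1$ goes through as you claim.
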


\begin{proof}
    Let $t=\lceil \frac{s}{16\mu}\rceil$. Note that the number of rainbow matchings in $G$ of size $t$ is at least $(nd/4)^t/t!$. Indeed, having chosen a rainbow matching of size $\ell<t$, the number of ways to choose the next edge is at least $nd/4$ because the number of forbidden edges is at most $$2\ell \cdot  d\mu+\ell \cdot \frac{nd\mu}{s}\leq 2(t-1)d\mu+\frac{(t-1)nd\mu}{s}\leq \frac{nd}{8}+\frac{nd}{16}\leq \frac{nd}{4}.$$ We divide by $t!$ because the order of the edges here does not matter.
    
    We claim that $(nd/4)^t/t!>\binom{n}{2t}\binom{s}{t}$. Indeed, $t!\binom{n}{2t}\binom{s}{t}\leq (t(\frac{en}{2t})^2\frac{es}{t})^t$, so it suffices to prove that $nd/4\geq es(\frac{en}{2t})^2$. But $t\geq \frac{s}{16\mu}$, so $$es\left(\frac{en}{2t}\right)^2\leq es \left(\frac{8en\mu}{s}\right)^2=\frac{64e^{3} n^{2}\mu^2}{s}\leq \frac{nd}{4},$$ where the last inequality holds by our lower bound on $s$.
    
    Hence, there are two rainbow matchings of size $t$ which span the same vertex set and the same colour set. Their symmetric difference is a suitable subgraph.
\end{proof}

\subsection{Combining the above}

\begin{lemma} \label{lem:combine}
	There is an absolute constant $C_0$ such that the following holds. Let $\mu\geq \log n$ and $d\geq \exp(C_0\sqrt{(\log n)(\log \mu)})$. Let $G$ be a properly edge-coloured  $\mu$-balanced graph with $n$ vertices and average degree $d$. Then $G$ has a 2-regular subgraph in which every colour appears exactly twice or zero times.
\end{lemma}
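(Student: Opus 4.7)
The plan is to combine Lemmas~\ref{lem:2-reg with cycles}, \ref{lem:2-reg with paths}, and \ref{lem:matchings} by a case split on the number of colours $s$ and the $2k$-cycle count of $G$ for an appropriate $k$. If $s\geq 10^5\mu^2nd^{-1}$, Lemma~\ref{lem:matchings} applies directly, settling the colour-rich case.

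Otherwise $s<10^5\mu^2nd^{-1}$, and I would take $k\geq 2$ to be the smallest integer with $s\leq \tfrac{1}{4}nd^{-k+1}\mu^{-k+1}$, so that $s$ sits just inside the upper endpoint of the valid range in Lemma~\ref{lem:2-reg with paths} for parameter $k$. By minimality $s>\tfrac{1}{4}n(d\mu)^{-k}$. Setting $\alpha=sd^k/(2\cdot 10^{6k}\mu^{k+1}n)$, one has $\alpha>d/(8\cdot 10^{6k}\mu^{2k+1})$, and the hypothesis $d\geq \exp(C_0\sqrt{(\log n)(\log \mu)})$ with $C_0$ sufficiently large simultaneously forces $\alpha>1$, $k\leq O(\sqrt{(\log n)/(\log \mu)})<d/32$, and the stricter $s$-bound $s\leq nd^{-k+1}(100\mu k)^{-4k}$ of Lemma~\ref{lem:2-reg with cycles}. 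This is the reason the exponential-in-$\sqrt{(\log n)(\log \mu)}$ threshold appears: one needs exactly $\log d\gtrsim k\log\mu\sim\sqrt{(\log n)(\log \mu)}$ to absorb the $\mu^{O(k)}$ cost of moving between the path- and cycle-lemma $s$-ranges.

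With $k$ and $\alpha$ fixed, I would split on $\hom(C_{2k},G)$. If $\hom(C_{2k},G)\leq \alpha nd^k$, Lemma~\ref{lem:2-reg with paths} finishes the proof immediately. Otherwise $\hom(C_{2k},G)>\alpha nd^k$ and the plan is to invoke Lemma~\ref{lem:2-reg with cycles} for the same $k$; its $s$-condition is already verified above, so all that remains is the hypothesis that $\hom(C_{2k},G')\geq (128k^{3/2}\mu^{1/2})^{2k}nd^k$ for every subgraph $G'\subseteq G$ with $e(G')\geq e(G)/2$.

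The hard part will be validating this ``every subgraph'' clause, since merely having many $2k$-cycles in $G$ does not forbid some large $G'$ from having very few. To handle this, if such a bad $G'$ exists, then $G'$ itself has at least $nd/2$ edges yet fewer than $(128k^{3/2}\mu^{1/2})^{2k}nd^k$ homomorphic $2k$-cycles, so an edge-coloured analogue of Lemma~\ref{lem:balanced regularization} regularises $G'$ into a $\mu'$-balanced graph with $\mu'$ inflated only by a polylogarithmic factor, to which Lemma~\ref{lem:2-reg with paths} then applies directly with $\alpha' = O(\beta_k)$ where $\beta_k=(128k^{3/2}\mu^{1/2})^{2k}$. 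Carefully tracking how the parameters $(n',d',s',\mu')$ evolve under this regularisation—and checking that they still meet the path lemma's range condition and the $\alpha'>1$ requirement—is the main technical burden, and is precisely what the exponential form of the lower bound on $d$ is designed to accommodate, providing slack for both the $\mu^{O(k)}$ gap between the two $s$-ranges and the polylogarithmic inflation from regularisation.
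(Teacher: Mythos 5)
Your high-level plan (case split on $s$ via Lemma~\ref{lem:matchings}, then a further split on $\hom(C_{2k},G)$ feeding into Lemmas~\ref{lem:2-reg with paths} and~\ref{lem:2-reg with cycles}, with $k$ dictated by where $s$ sits) is the right shape and correctly identifies why the exponential threshold $d\geq\exp(C_0\sqrt{(\log n)(\log\mu)})$ appears. But the execution has two genuine gaps.

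The central missing idea is the paper's random-sparsification step. The paper picks $k$ smallest with $s\geq nd^{-k+1/2}$, then defines $d'$ by $s=n(d')^{-k+1/2}$ and passes to a random subgraph where each edge is kept with probability $d'/d$, yielding a $4\mu$-balanced graph $G'$ of average degree roughly $d'$. This places $s$ at the critical scale $n(d')^{-k+1/2}$, which sits comfortably inside \emph{both} the path lemma's window $10^{6k}\alpha\mu'^{k+1}n(d')^{-k}\leq s\leq\tfrac14 n(d')^{-k+1}\mu'^{-k+1}$ (allowing $\alpha>1$) \emph{and} the much tighter cycle-lemma constraint $s\leq n(d')^{-k+1}(100\mu' k)^{-4k}$. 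Working with the original $d$ as you propose, $s$ need not be in either range: with your (maximality) choice of $k$ one gets $s>\tfrac14 n(d\mu)^{-k}$, and then $\alpha=sd^k/(2\cdot10^{6k}\mu^{k+1}n)$ is only bounded below by $\bigl(8\cdot10^{6k}\mu^{2k+1}\bigr)^{-1}$, not by $d/(8\cdot10^{6k}\mu^{2k+1})$ as you claim — in particular it can be far below~$1$, so the path lemma simply does not apply. Likewise, $s\leq\tfrac14 nd^{-k+1}\mu^{-k+1}$ does \emph{not} imply $s\leq nd^{-k+1}(100\mu k)^{-4k}$ (the latter bound is smaller by a factor of roughly $\mu^{3k}k^{4k}$), so the cycle lemma's $s$-condition is also not secured. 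No amount of largeness of $d$ resolves this, because it is a mismatch between $s$ and $k$ that only the degree-rescaling fixes.

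Your plan for the ``every subgraph'' clause is also off: if some $G'\subseteq G$ with $e(G')\geq e(G)/2$ has few homomorphic $2k$-cycles, there is no need for an edge-coloured analogue of Lemma~\ref{lem:balanced regularization}. A spanning subgraph with at least half the edges of a $\mu$-balanced graph is automatically $O(\mu)$-balanced (max degree and per-colour edge counts are inherited, and the average degree drops by at most a factor of~$2$), which is exactly what the paper uses. Re-regularising would change $n,d,s$ uncontrollably relative to the fixed $k$ and re-introduce the very range problem above; the inheritance argument is both simpler and actually closes the case.
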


\begin{proof}
	Let $s$ be the number of colours used. Let $k$ be the smallest positive integer such that $s\geq nd^{-k+1/2}$. Then $k\leq \frac{\log n}{\log d}+3/2<\frac{3\log n}{\log d} \leq \frac{3}{C_0}\sqrt{\frac{\log n}{\log \mu}}$. Hence, if $C_0$ is a sufficiently large absolute constant, then $d\geq \mu^{300k}$, so using $\mu\geq \log n\geq k$ and $\mu\geq 10$, we obtain
	\begin{equation}
	    d\geq 10^{100k} k^{100k}\mu^{100k} \label{eqn:d large}.
	\end{equation}
	Fix an absolute constant $C_0$ for which this holds.
	
	If $k=1$, then $s\geq nd^{-1/2}$. But then, noting that the inequality  $d^{1/2}\geq 10^5\mu^2$ holds, we are done by Lemma \ref{lem:matchings}. 
	
	Therefore, we may assume that $k\geq 2$. By the minimality of $k$, we have $s<nd^{-k+3/2}$. Choose $d'$ such that $s=n(d')^{-k+1/2}$. Then $d'\leq d$ since $s\geq nd^{-k+1/2}$. As $n(d')^{-k+1/2}=s<nd^{-k+3/2}$, we have $d'\geq d^{(k-3/2)/(k-1/2)}\geq d^{1/3}$. Let $G'$ be a random subgraph of $G$ where each edge is kept independently with probability $d'/d$. As $\Delta(G)\leq d\mu $, we have $\mathbb{E}(d_{G'}(v))\leq d'\mu$  for every $v\in V(G)$. Hence, by Lemma \ref{lemma:chernoff}, we have
	$$\mathbb{P}(d_{G'}(w)\geq 2d'\mu)\leq e^{-d'\mu/3}\leq \frac{1}{n^2}.$$
	Similarly, if $c$ is a colour, then the probability that $c$ is used more than $2nd'\mu/s$ times in $G'$ is at most $1/n^{2}$. Finally, $\mathbb{E}(e(G'))=nd'/2$, so by Lemma \ref{lemma:chernoff} again, we have $\mathbb{P}(e(G')\leq nd'/4)\leq e^{-nd'/16}<1/4$ and $\mathbb{P}(e(G')\geq nd')\leq e^{-nd'/6}<1/4$. Therefore, with positive probability, we have $nd'/4\leq e(G')\leq nd'$, $\Delta(G')\leq 2d'\mu$, and every colour is used at most $2nd'\mu/s$ times in $G'$. This means that there exists a choice for $G'$ such that $G'$ is $4\mu$-balanced with average degree between $d'/2$ and $2d'$; fix such a choice.

	Let $\alpha_0=(128k^{3/2}(4\mu)^{1/2})^{2k}$ and $\alpha=\alpha_0 4^{k}$.
	
	Suppose that every subgraph $G''$ of $G'$ with at least $e(G')/2$ edges satisfies $\hom(C_{2k},G'')\geq \alpha_0 n (2d')^{k}$. Note that $s=n(d')^{-k+1/2}$ implies that $s\leq n(2d')^{-k+1}(100(4\mu) k)^{-4k}$, using $d'\geq d^{1/3}$ and (\ref{eqn:d large}). Then, since $G'$ has average degree at most $2d'$, by Lemma \ref{lem:2-reg with cycles}, $G'$ has a $2$-regular subgraph in which every colour appears exactly twice or zero times.
	
	Otherwise, $G'$ has a subgraph $G''$ with at least $e(G')/2$ edges such that $\hom(C_{2k},G'')\leq \alpha_0 n (2d')^{k}$. We may assume that $G''$ is a spanning subgraph of $G'$ (by adding back some isolated vertices). If $G''$ has average degree $d''$, then $d''\geq d'/2$, so $G''$ is $8\mu$-balanced and $\hom(C_{2k},G'')\leq \alpha n(d'')^{k}$. Note that $s=n(d')^{-k+1/2}$ and $d'/2\leq d''\leq d'$ imply that $2^{-k+1/2}n(d'')^{-k+1/2}\leq s\leq n(d'')^{-k+1/2}$. Furthermore, using $d''\geq d'/2\geq d^{1/3}/2$ and (\ref{eqn:d large}), we obtain $10^{6k}\alpha (8\mu)^{k+1}n (d'')^{-k}\leq s\leq \frac{1}{4}n(d'')^{-k+1}(8\mu)^{-k+1}$. Hence, we can apply Lemma \ref{lem:2-reg with paths} to conclude that $G''$ has a $2$-regular subgraph in which every colour appears exactly twice or zero times. 
\end{proof}

It is not hard to see that, using Lemma \ref{lem:balanced regularization}, Lemma \ref{lem:combine} implies Theorem~\ref{thm:2-reg}.

\begin{proof}[Proof of Theorem \ref{thm:2-reg}] 
Let $C_0$ be the constant given by Lemma \ref{lem:combine}, let $C=10C_0$  and assume that $n$ is sufficiently large with respect to $C$. Let $d=\exp(C\sqrt{(\log n) (\log\log n)})$ and let $\mathcal{G}$ be a linear $3$-graph on $n$ vertices with at least $nd$ edges. Let $\lambda=\lceil \log n\rceil$ and let $\mu=96\lambda^6$. By Lemma \ref{lem:balanced regularization}, $\mathcal{G}$ contains a $3$-partite $\mu$-balanced subhypergraph $\mathcal{H}$ with at least $|V(\mathcal{H})|d/(81\lambda^3)$ edges and parts $X,Y,Z$ such that $|X|\leq |Y|=|Z|$. Let $s=|X|$ and $n_0=|Y|+|Z|$. 

Let $G$ be the edge coloured graph on vertex set $Y\cup Z$, in which for $x\in X$, $y\in Y$ and $z\in Z$, $yz$ is an edge of colour $x$ if $xyz\in E(\mathcal{H})$. Then $G$  is a $\mu$-balanced properly edge coloured graph on $n_0$ vertices with average degree $$d_0=\frac{2e(G)}{n_0}=\frac{2e(\mathcal{H})}{n_0}\geq \frac{d}{81\lambda^{3}}> \exp(C_0\sqrt{(\log n_0) (\log\mu)}).$$  Hence, by Lemma \ref{lem:combine}, $G$ contains a 2-regular subgraph in which every colour appears exactly twice or zero times. This subgraph corresponds to a 2-regular subhypergraph of $\mathcal{H}$, finishing the proof.
\end{proof}

\section{$2$-regular subhypergraphs of bounded size}\label{sect:2-reg_bounded}

In this section, we prove the following theorem, which (as we will see in a moment) implies Theorem \ref{thm:bounded 2-reg}.

\begin{theorem} \label{thm:two rainbow cycles}
    For any integer $\ell\geq 2$, there is a constant $C=C(\ell)$ such that the following holds. Let $G$ be a graph with at most $n$ vertices and at least $Cn^{3/2+1/\ell}$ edges, with a proper edge-colouring that uses at most $n$ colours. Then $G$ has two vertex-disjoint rainbow cycles of length $2\ell$ with the same set of colours used on both cycles.
\end{theorem}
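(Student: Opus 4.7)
The plan is to adapt the strategy behind Lemma~\ref{lem:2-reg with cycles} (the ``many cycles'' case) to the simpler goal of producing a single pair of vertex-disjoint rainbow $2\ell$-cycles with the same colour set, rather than a long nice sequence. First I would regularise $G$ to a $\mu$-balanced subgraph $G'$ with $\mu=(\log n)^{O(1)}$ via a variant of Lemma~\ref{lem:DHL regularization} adapted to properly edge-coloured graphs (dyadically partition vertices by degree and colours by usage and apply Chernoff concentration as in Lemma~\ref{lem:balanced regularization}), losing only a polylogarithmic factor in the edge count. The resulting $G'$ then has average degree $d\gtrsim n^{1/2+1/\ell}/(\log n)^{O(1)}$, with $\Delta(G')\leq \mu d$ and each colour used at most $\mu nd/s$ times. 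Since $d^{\ell}$ is polynomially larger than $\mu^{\ell}n$ for fixed $\ell$ and large $n$, Sidorenko's conjecture for $C_{2\ell}$ (known for every even cycle) gives $\hom(C_{2\ell},G')\geq (2e(G'))^{2\ell}/n^{2\ell}\geq d^{2\ell}$, which verifies the hypothesis of Lemma~\ref{lemma:hom} and therefore yields at least $N\geq\tfrac12\hom(C_{2\ell},G')$ labelled rainbow $2\ell$-cycles.

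Next I would sample uniformly at random two partitions $V(G')=A_1\sqcup\cdots\sqcup A_{2\ell}$ and $[s]=B_1\sqcup\cdots\sqcup B_{2\ell}$, and call a labelled rainbow cycle \emph{compatible} if its $i$-th vertex lies in $A_i$ and the colour of its $i$-th edge lies in $B_i$. In expectation a $(2\ell)^{-4\ell}$-fraction of the rainbow cycles are compatible, so for some choice of partition there are at least $N/(2\ell)^{4\ell}$ compatible cycles. The key structural point, exactly the $t=1$ instance of Lemma~\ref{lem:two cycles}, is that if two compatible labelled rainbow cycles share any vertex and use the same ordered colour tuple $(b_1,\dots,b_{2\ell})\in B_1\times\cdots\times B_{2\ell}$, then properness of the edge-colouring together with the partition constraint force them to coincide: the unique edge of colour $b_1$ at the shared vertex determines the second vertex of both, and an easy induction propagates equality around the cycle. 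Consequently, two \emph{distinct} compatible cycles with the same colour tuple are automatically vertex-disjoint, producing exactly the configuration the theorem asks for.

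It remains to arrange the pigeonhole inequality $N/(2\ell)^{4\ell}>\prod_{i=1}^{2\ell}|B_i|$, whose right-hand side upper-bounds the number of possible colour tuples. For $\ell=2$ this amounts to $d\gtrsim n$, which follows from the hypothesis once $C$ is taken sufficiently large. The hardest part of the proof will be the regime $\ell\geq3$, where the Sidorenko bound $N\gtrsim n^{\ell+2}$ is no longer larger than the $n^{2\ell}$ bound on the number of colour tuples; to close this gap I would either sharpen the rainbow cycle count by iterating Cauchy--Schwarz on rainbow paths in the style of Janzer's rainbow Tur\'an estimates, or condition the pigeonhole on additional invariants (for example a distinguished antipodal pair of vertices on the cycle) so as to reduce the effective number of colour tuples and simultaneously concentrate the count on a smaller family.
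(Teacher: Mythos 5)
Your proposal is a genuinely different route from the paper's, but it has a gap that you yourself acknowledge, and the suggested fixes are not developed enough to close it. The quantitative issue is fatal: after Sidorenko and Lemma~\ref{lemma:hom} you obtain $N\gtrsim d^{2\ell}\approx n^{\ell+2}$ compatible rainbow $2\ell$-cycles, while the number of possible colour tuples $(b_1,\dots,b_{2\ell})\in B_1\times\cdots\times B_{2\ell}$ can be as large as $(s/2\ell)^{2\ell}$, which is $\Theta(n^{2\ell})$ when the number of colours $s$ is close to $n$ (which is exactly the regime arising from linear $3$-graphs with a large part). Since $\ell+2<2\ell$ for all $\ell\geq 3$, the pigeonhole $N/(2\ell)^{4\ell}>\prod_i|B_i|$ fails, and the only case it passes ($\ell=2$) is vacuous anyway because $Cn^{2}$ exceeds $\binom{n}{2}$ for $C>1/2$. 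The reason the analogous count works in Lemma~\ref{lem:2-reg with cycles} is that one takes $t$ to be large (proportional to $s/d$), so the cycle count $(nd^k)^t$ beats $t!\binom{s}{2kt}$ by a large margin; freezing $t=1$, as your argument forces, loses precisely the leverage that makes that lemma work. Your two suggested repairs --- ``iterate Cauchy--Schwarz on rainbow paths'' and ``condition on additional invariants such as a distinguished antipodal pair'' --- are sketches, not arguments; neither obviously gains the missing factor of $n^{\ell-2}$, and you would need to show that it does.

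The paper's actual proof sidesteps all of this with a product/tensor construction: it builds an auxiliary graph $\cG$ whose vertices are ordered pairs $(u_1,u_2)$ of distinct vertices of $G$, with $(u_1,u_2)\sim(v_1,v_2)$ an edge whenever $u_1v_1$ and $u_2v_2$ carry the same colour. A single $2\ell$-cycle in $\cG$ then encodes a pair of closed walks in $G$ whose $i$-th edges match in colour for every $i$. Convexity over colour classes shows $e(\cG)\geq\frac{C^2}{4}|V(\cG)|^{1+1/\ell}$, and Lemma~\ref{lem:good cycle} (a relation-avoiding cycle lemma from \cite{Jan20}) is invoked with $\sim$ = ``share a vertex of $G$'' and $\approx$ = ``same colour'' to obtain a $2\ell$-cycle in $\cG$ whose vertices are pairwise non-$\sim$ and whose edges are pairwise non-$\approx$; this decodes exactly to two vertex-disjoint rainbow $2\ell$-cycles in $G$ with identical colour sets. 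Note that this proof requires no balancedness, no Sidorenko, and no random partition; the one nontrivial input is Lemma~\ref{lem:good cycle}. If you want to salvage a counting-style argument, the right move is precisely this tensoring: counting $2\ell$-cycles in $\cG$ is, morally, counting \emph{pairs} of colour-matched walks in $G$, which rebalances the pigeonhole in your favour.
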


In order to see that Theorem \ref{thm:two rainbow cycles} implies Theorem \ref{thm:bounded 2-reg}, let $G$ be an $n$-vertex linear $3$-graph with at least $\frac{9}{2}Cn^{3/2+1/\ell}$ edges for the constant $C$ provided by Theorem \ref{thm:two rainbow cycles}. By standard probabilistic arguments, $G$ has a $3$-partite subhypergraph $G'$ with at least $Cn^{3/2+1/\ell}$ edges. By treating vertices in one of the parts as colours, we obtain a properly edge-coloured graph with at most $n$ vertices and at least $Cn^{3/2+1/\ell}$ edges, in which at most $n$ colours are used. By Theorem~\ref{thm:two rainbow cycles}, there exist two vertex-disjoint rainbow cycles of length $2\ell$ using the same set of colours. The union of these two cycles corresponds to a $2$-regular subhypergraph in $G'$ which has precisely $4\ell$ edges.

The proof of Theorem \ref{thm:two rainbow cycles} is similar to that of Theorem 1.11 from \cite{Jan20}. The key ingredient is Theorem 3.1 from \cite{Jan20}, which we now recall.

\begin{lemma}[\cite{Jan20}] \label{lem:good cycle}
	Let $\ell\geq 2$ and $s$ be positive integers. Then there exists a constant $c=c(\ell,s)$ with the following property. Suppose that $G=(V,E)$ is a graph with $n$ vertices and at least $cn^{1+1/\ell}$ edges. Let $\sim$ be a symmetric binary relation on $V$ such that for every $u\in V$ and $v\in V$, $v$ has at most $s$ neighbours $w\in V$ which satisfy $u\sim w$. Let $\approx$ be a binary relation on $E$ such that for every $uv\in E$ and $w\in V$, $w$ has at most $s$ neighbours $z\in V$ which satisfy $uv\approx wz$. Then $G$ contains a $2\ell$-cycle $x_1x_2\dots x_{2\ell}$ such that $x_i\not \sim x_j$ for every $i\neq j$ and $x_ix_{i+1}\not \approx x_jx_{j+1}$ for every $i\neq j$ (where $x_{2\ell+1}:=x_1$).
\end{lemma}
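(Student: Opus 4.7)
The plan is to follow the standard Bondy--Simonovits-type path-counting strategy, combined with a careful entrywise use of the hypotheses on $\sim$ and $\approx$ that gains a factor of $s/\Delta$ per constrained step of a cycle-building walk. The target $2\ell$-cycle will be constructed as the union of two internally vertex-disjoint length-$\ell$ paths with common endpoints $(a,b)$, where $(a,b)$ is chosen so that very many such paths exist.

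First I would clean $G$ to a subgraph $G_0$ with minimum degree at least $cn^{1/\ell}/2$ by iteratively removing low-degree vertices, and then, via a Chernoff-based random-sparsification step in the spirit of Lemma~\ref{lem:balanced regularization}, reduce to an almost-regular subgraph with $\Delta(G_0)=O(d)$, where $d=\Theta(cn^{1/\ell})$. Counting walks of length $\ell$ in $G_0$ and averaging over the $n^2$ possible endpoint pairs then yields a pair $(a,b)$ admitting at least $M=\Omega(c^\ell)$ labelled paths of length $\ell$ from $a$ to $b$; non-injective walks are a lower-order term once $c=c(\ell,s)$ is large. Let $\cP$ denote this family.

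For each ordered pair of cycle-positions $(i,j)$ with $i\neq j$, I would bound the number of pairs $(P,Q)\in\cP\times\cP$ producing a $\sim$-violation at $(i,j)$ by enumerating the $2\ell$ vertices of the resulting cycle in an order that places $x_i$ strictly before $x_j$: at a generic step the number of valid extensions is $O(d)$, but at the step placing $x_j$ the already-fixed vertices $x_i$ and $x_{j-1}$ together with the $\sim$-hypothesis (applied with $u:=x_i$, $v:=x_{j-1}$) cut the number of admissible choices down to at most $s$. This gives a saving of $s/d$ relative to the unconstrained pair-count $M^2$; summing over the $O(\ell^2)$ index pairs and over both relations keeps the total fraction of bad pairs at $O(\ell^2 s/d)$, which is less than $1/2$ once $c=c(\ell,s)$ is large enough. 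The $\approx$-violations are handled analogously, the edge-hypothesis being applied at the step placing $x_{j+1}$ given the already-fixed edge $x_ix_{i+1}$ and the vertex $x_j$. Pairs $(P,Q)$ that are not internally vertex-disjoint are dealt with in the same framework by taking ``$\sim$'' to be equality (which satisfies the hypothesis trivially with $s=1$).

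The main technical obstacle will be carrying out this bad-pair count cleanly when the offending indices $i,j$ lie on opposite sides of the cycle --- one on $P$, the other on $Q$. In this case the constraint from the first path has to propagate through the fixed endpoint $a$ or $b$ to the choice of the constrained vertex on the second path, and applying the saving factor $s/d$ at exactly the correct step in the enumeration requires a Cauchy--Schwarz (or dependent-random-choice) estimate on the pair of paths, together with the tight control of the maximum degree of $G_0$ ensured by the cleaning step (so that $d$, rather than a loose $\Delta\gg d$, appears in the denominator of the saving). A secondary subtlety is that the endpoint constraint $x_{\ell+1}=b$ must be correctly imposed throughout the counting without destroying the $s/d$ gain; this is what forces the initial regularization to be genuinely almost-regular. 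Once these bookkeeping steps are in place, the existence of a pair $(P,Q)$ yielding the required $2\ell$-cycle follows by subtraction from $M^2$.
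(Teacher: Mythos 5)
This lemma is imported verbatim from \cite{Jan20} (Theorem 3.1 there); the present paper does not reprove it, so your attempt has to be measured against Janzer's original argument, which runs through global homomorphism counting: one lower-bounds $\hom(C_{2\ell},G)\geq (2e(G)/n)^{2\ell}$ and then shows, via the key lemma of that paper (the same machinery as Lemma \ref{lemma:hom} above), that the number of homomorphic $2\ell$-cycles that are degenerate or violate $\sim$ or $\approx$ is at most $O_{\ell,s}\bigl(\Delta^{1/2}n^{1/(2\ell)}\hom(C_{2\ell},G)^{1-1/(2\ell)}\bigr)$, which is $o(\hom(C_{2\ell},G))$ precisely when $e(G)\gg n^{1+1/\ell}$.

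Your proposal has a genuine gap at exactly the point you flag as ``the main technical obstacle,'' and the obstacle is not a bookkeeping issue. First, fixing a single popular pair $(a,b)$ does not work: averaging gives you a pair with $M=\Omega(c^{\ell})$ paths, but nothing prevents \emph{all} $M^2$ pairs of paths at that particular $(a,b)$ from being bad, so you are forced to sum over all endpoint pairs, i.e.\ to work with $\sum_{a,b}M_{a,b}^2=\hom(C_{2\ell},G)$ --- which is precisely the global framework of \cite{Jan20}. Second, and more seriously, the claimed bad-pair fraction $O(\ell^2 s/d)$ is false for the simplest constraint you invoke, namely ``$\sim$ is equality'' (degenerate copies). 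Enumerating a homomorphic $2\ell$-cycle with $x_i=x_j$ in your step-by-step fashion gives at most $O_{\ell}(n\,\Delta^{2\ell-2})$ such copies, against a total of $\hom(C_{2\ell},G)\geq d^{2\ell}$; even after perfect regularization ($\Delta=O(d)$) the ratio is $\Theta(n/d^2)=\Theta(n^{1-2/\ell}/c^2)$, which tends to infinity for every $\ell\geq 3$. So no choice of $c(\ell,s)$ rescues the naive count, and the same failure infects the $\sim$- and $\approx$-violation counts. Closing this gap requires the self-improving estimate with the exponent $1-\tfrac{1}{2\ell}$ and the factor $n^{1/(2\ell)}$ (obtained in \cite{Jan20} by an asymmetric Cauchy--Schwarz argument comparing path- and cycle-homomorphism counts), which is the entire content of the cited theorem and is not supplied, or even sketched, by your ``save a factor $s/d$ at one step'' enumeration.
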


\begin{proof}[Proof of Theorem \ref{thm:two rainbow cycles}]
Let $C=C(\ell)$ be sufficiently large and let $G$ be a graph with at most $n$ vertices and at least $Cn^{3/2+1/\ell}$ edges, with a proper edge-colouring that uses at most $n$ colours. Define a graph $\cG$ as follows. The vertex set of $\cG$ is the set of ordered pairs of distinct vertices in $G$. Vertices $(u_1,u_2)\in V(\cG)$ and $(v_1,v_2)\in V(\cG)$ are joined by an edge in $\cG$ if $u_1v_1$ and $u_2v_2$ are edges of the same colour in $G$. In this case, we colour the edge in $\cG$ between $(u_1,u_2)$ and $(v_1,v_2)$ by this colour. Assuming that $G$ has $t$ red edges, the number of pairs of red edges in $G$ is $\binom{t}{2}$ and hence $\cG$ has at least $\binom{t}{2}$ red edges. As the number of colours is at most $n$, this implies by convexity that $\cG$ has at least $n\binom{Cn^{1/2+1/\ell}}{2}\geq \frac{C^2}{4}n^{2+2/\ell}$ edges. Since $|V(\cG)|\leq |V(G)|^2\leq n^2$, it follows that $e(\cG)\geq \frac{C^2}{4}|V(\cG)|^{1+1/\ell}$.

For $(u_1,u_2),(v_1,v_2)\in V(\cG)$, write $(u_1,u_2)\sim (v_1,v_2)$ if $\{u_1,u_2\}\cap \{v_1,v_2\}\neq \emptyset$. Moreover, for $e,f\in E(\cG)$, write $e\approx f$ if $e$ and $f$ have the same colour. Note that for any $(u_1,u_2),(v_1,v_2)\in V(\cG)$, the number of $(w_1,w_2)\in V(\cG)$ with $(v_1,v_2)(w_1,w_2)\in E(\cG)$ and $(u_1,u_2)\sim (w_1,w_2)$ is at most $4$. Indeed, we must have $w_i=u_j$ for some $i,j\in \{1,2\}$ and each case leaves at most one possibility for $(w_1,w_2)$. For example, when $w_1=u_2$, then $v_2w_2$ must have the same colour as $v_1w_1=v_1u_2$, and since the colouring of $G$ is proper, there is at most one such $w_2$. The other three cases are nearly identical. Similarly, for any $e\in E(\cG)$ and $(w_1,w_2)\in V(\cG)$, there is at most one $(z_1,z_2)\in V(\cG)$ such that $(w_1,w_2)(z_1,z_2)\in E(\cG)$ and $e\approx (w_1,w_2)(z_1,z_2)$ since both $w_1z_1$ and $w_2z_2$ must have the same colour as $e$. This means that if $\frac{C^2}{4}\geq c(\ell,4)$, then we can use Lemma \ref{lem:good cycle} with $s=4$ to find a $2\ell$-cycle $x_1x_2\dots x_{2\ell}$ in $\cG$ such that $x_i\not \sim x_j$ for every $i\neq j$ and $x_ix_{i+1}\not \approx x_jx_{j+1}$ for every $i\neq j$. This corresponds to two vertex-disjoint rainbow cycles of length $2\ell$ in $G$ which use the same set of colours.
\end{proof}

\section{$r$-regular subhypergraphs}\label{sect:r-reg}

In this section, we prove Theorem \ref{thm:k-uni_r-reg}. We start with the lower bound, which we restate as follows.

\begin{theorem}
For every positive integer $k\geq 2$ there exists $C_0=C_0(k)>0$ such that the following holds. Let $2\leq r\leq n$ be positive integers. Then there exists a linear $k$-graph $G$ on $n$ vertices with at least $$C_0 r^{\frac{1}{k-1}} n^{2-\frac{r}{(r-1)(k-1)}}$$ edges containing no $r$-regular subhypergraph.
\end{theorem}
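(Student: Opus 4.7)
My plan is to construct the required $k$-graph by the probabilistic deletion method. Let $G_0$ be the random $k$-uniform hypergraph $G^{(k)}(n, p)$ with edge probability
$$p = c \cdot r^{1/(k-1)} \cdot n^{-(k-2) - r/((r-1)(k-1))}$$
for a sufficiently small constant $c = c(k) > 0$. Then the expected number of edges is
$$\mathbb{E}|E(G_0)| = p\binom{n}{k} = \Theta_k\!\left(r^{1/(k-1)} n^{2 - r/((r-1)(k-1))}\right),$$
matching the desired lower bound up to a constant.

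We then delete edges from $G_0$ to simultaneously enforce linearity and destroy every $r$-regular sub-hypergraph. The expected number of unordered pairs of edges sharing at least two vertices is $O_k(p^2 n^{2k-2})$, and by the choice of exponent of $n$ in $p$ this is $o(pn^k)$ (the factor $pn^{k-2}$ becomes small), so deleting one edge from each such pair costs $o(pn^k)$ edges. The expected number of $r$-regular sub-hypergraphs on $v$ vertices is bounded by
$$E_v \le \binom{n}{v}\, N(v, r, k)\, p^{vr/k},$$
where $N(v, r, k)$ is the number of $r$-regular $k$-graphs on $v$ labelled vertices; removing one edge per $r$-regular sub-hypergraph costs $\sum_v E_v$ edges. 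Provided $\sum_v E_v = O(pn^k)$, the altered $k$-graph is linear, contains no $r$-regular sub-hypergraph, and retains $\Omega(pn^k)$ edges, with positive probability, completing the construction.

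The crux is therefore bounding $\sum_v E_v$. For $v$ close to the minimum feasible value ($v \ge k+1$ in general, or $v \ge r(k-1)+1$ in the linear sub-case, as forced by the pair-count bound $\binom{k}{2}\cdot (vr/k)\le \binom{v}{2}$), the summand is easily seen to be $o(pn^k)$ by our choice of $c$. The main obstacle is moderate-to-large $v$: the naive bound $N(v,r,k) \le \binom{\binom{v}{k}}{vr/k}$ is too weak and must be tightened. One improvement uses a Stirling-type correction of order $r^{-\Theta(v)}$ coming from the probability that each of $v$ nearly-independent vertex degrees equals exactly $r$; another exploits the linearity constraint carried over from the first deletion, which drastically restricts the admissible $r$-regular sub-configurations. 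If the fully random $k$-graph at this density still admits too many $r$-regular sub-hypergraphs, a structured alternative is to start with a partial Steiner system of density $\Theta_k(n^{2-1/(k-1)})$ obtained from the R\"odl nibble, and then sparsify it by keeping each edge independently with probability $q = c_0\, r^{1/(k-1)} n^{-1/((r-1)(k-1))}$; the resulting hypergraph is automatically linear and the count of potential $r$-regular sub-configurations inherits the rigidity of the underlying design, making the analogue of $\sum_v E_v$ easier to control.
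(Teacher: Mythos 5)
Your proposal identifies the right outline (random construction plus deletion of bad pairs and $r$-regular subconfigurations), and you are candid about where the difficulty lies --- bounding $\sum_v E_v$, the expected count of $r$-regular sub-hypergraphs over all sizes $v$. But you do not resolve that difficulty, and the ideas you sketch for resolving it (a Stirling-type correction for the regularity constraint, or ``inheriting rigidity'' from a Steiner system) are not carried out and are not what the paper does. As stated, the argument has a genuine gap: in a purely random $k$-graph, $r$-regular subhypergraphs can live on vertex sets of almost any size up to $n$, so the union bound ranges over an enormous family, and the naive entropy bound $N(v,r,k)\le\binom{\binom{v}{k}}{vr/k}$ indeed does not suffice. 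Nothing in your write-up shows the improved bounds are strong enough.

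The paper sidesteps this entirely with a structural idea you are missing. Instead of $G^{(k)}(n,p)$, it takes a bipartite split $V=A\cup B$ with a deliberately small $|A|\approx c_0\, r^{r/((r-1)(k-1))} n^{1-r/((r-1)(k-1))}$, and only allows edges with \emph{exactly one} vertex in $A$. Because every edge meets $A$ exactly once, a double count of edges in any $r$-regular subhypergraph on a vertex set $U$ forces $|U\cap B|=(k-1)|U\cap A|\le (k-1)|A|$. Thus \emph{every} potential $r$-regular subhypergraph automatically lives on $O(|A|)$ vertices, and the union bound need only run over $v\le (k-1)|A|$ in the $(k-1)$-graph $H_0$ obtained by restricting edges to $B$. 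Moreover, one does not need to destroy $r$-regular subhypergraphs directly: it suffices to destroy ``bad'' configurations $H'$ in $H_0$ with $e(H')=\frac{v(H')\,r}{k-1}$, which is a far more tractable counting problem and makes the expectation $\mathbb{E}(Z)\le 1$ achievable with a clean computation. Without this bipartite trick (or an equivalent device that caps the size of any $r$-regular subhypergraph), your deletion scheme does not close, and the burden of the proof remains exactly where you left it.
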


\begin{proof}
If $\frac{r}{(r-1)(k-1)}\geq 1$, then the statement is trivial as it is easy to find a linear $k$-graph on $n$ vertices with $\Theta_k(rn)$ edges and maximum degree less than $r$. Suppose therefore that $\frac{r}{(r-1)(k-1)}< 1$. In particular, we have $k\geq 3$. Assume that $n$ is sufficiently large with respect to $k$. First, we define a $k$-graph $G_0$ as follows. Let the vertex set of $G_0$ be $A\cup B$ such that $|A|=c_0 r^{\frac{r}{(r-1)(k-1)}} n^{1-\frac{r}{(r-1)(k-1)}}$ and $|B|=n-|A|$, where $c_0=c_0(k)>0$ is some sufficiently small number depending only on $k$. Let $p=\frac{1}{8(k-1)!n^{k-2}}$ and let each $k$-element subset of $A\cup B$ with exactly one vertex in $A$ be an edge of $G_0$ with probability $p$, independently of each other. Let $H_0$ denote the $(k-1)$-graph we get by restricting the edges of $G_0$ to $B$ (we include edges without multiplicities).

Let $X$ be the number of edges of $G_0$. Then $$\mathbb{E}(X)=|A|\binom{|B|}{k-1}p\geq \frac{|A|n^{k-1}p}{2(k-1)!}.$$

Let $Y$ be the number of pairs of edges of $G_0$ which intersect in more than 2 vertices. Then 
$$\mathbb{E}(Y)\leq n^{2k-4}|A|^{2}p^2+|A|n^{2k-3}p^2\leq 2|A|n^{2k-3}p^2\leq \frac{\mathbb{E}(X)}{2}.$$
Indeed, $n^{2k-4}|A|^{2}$ is an upper bound on the number of pairs of edges that intersect in at least 2 elements, none of which is in $A$. On the other hand,  $|A|n^{2k-3}$ is an upper bound on the number of pairs of edges that intersect in at least 2 elements, one of which is in $A$.

Say that a non-empty subhypergraph $H'$ of $H_0$ is \emph{bad} if $v(H')\leq |A|(k-1)$ and $e(H')=\frac{v(H')r}{k-1}$. Let $Z$ denote the number of bad subhypergraphs of $H_0$. Write $q=1-(1-p)^{|A|}<p|A|$, then $q$ is the probability that a $(k-1)$-element subset of $B$ is an edge of $H_0$. We have
\begin{align*}
    \mathbb{E}(Z)&\leq \sum_{m=k-1}^{|A|(k-1)}\binom{|B|}{m}\binom{m^{k-1}}{mr/(k-1)}q^{\frac{mr}{k-1}}\leq \sum_{m=k-1}^{|A|(k-1)}\left(\frac{en}{m}\right)^{m}\left(\frac{em^{k-2}k}{r}\right)^{\frac{mr}{k-1}}q^{\frac{mr}{k-1}}\\
    &=\sum_{m=k-1}^{|A|(k-1)}\left(\frac{en\cdot (em^{k-2}kq)^{\frac{r}{k-1}}}{mr^{\frac{r}{k-1}}}\right)^{m}.
\end{align*}
Using $m\leq |A|(k-1)$ and $q\leq |A|p$, we have 
\begin{equation}
    \frac{en(e m^{k-2} k q)^{\frac{r}{k-1}}}{mr^{\frac{r}{k-1}}}< n |A|^{r-1} \left(\frac{c_1p}{r}\right)^{\frac{r}{k-1}}, \label{eqn:third}
\end{equation}
where $c_1=c_1(k)>0$ is some number depending only on $k$ (for the inequality above, note that the left hand side is non-decreasing in $m$ since the exponent of $m$ is $\frac{k-2}{k-1}r-1\geq r/2-1\geq 0$, where we used our earlier assumption that $k\geq 3$). Assuming $c_0$ is sufficiently small with respect to $c_1$, the right hand side of (\ref{eqn:third}) is at most $1/2$. Hence, $\mathbb{E}(Z)\leq 1$.

For each pair of edges in $G_0$ sharing at least 2 vertices, delete one of the edges, and for each bad subhypergraph $H'$ of $H_0$, delete an edge of $G_0$ whose restriction to $B$ is contained in $H'$. Let the resulting subhypergraph be $G$. Let $H$ be the $(k-1)$-graph we get by restricting the edges of $G$ to $B$.

Note that $G$ is a linear hypergraph, and no subhypergraph of $H$ on $m\leq |A|(k-1)$ vertices spans exactly $\frac{mr}{k-1}$ edges. The latter implies that $G$ contains no $r$-regular subhypergraph. Indeed, otherwise, let $U\subset V(G)$ be the vertex set of an $r$-regular subhypergraph of $G$. Then we have $|U\cap B|=(k-1)|U\cap A|$, as the number of edges in such a hypergraph is $r|U\cap A|=\frac{r|U\cap B|}{k-1}$. But then $|U\cap B|\leq (k-1)|A|$, which implies that $e(H[U\cap B])\neq\frac{r |U\cap B|}{k-1}$, a contradiction.

Finally, $e(G)\geq X-Y-Z$, so 
$$\mathbb{E}(e(G))\geq \mathbb{E}(X-Y-Z)\geq \frac{\mathbb{E}(X)}{2}-1\geq C_0 r^{\frac{r}{(r-1)(k-1)}} n^{2-\frac{r}{(r-1)(k-1)}},$$
where $C_0=C_0(k)>0$ is some number depending only on $k$. Hence, there exists a choice for $G_0$ such that $e(G)\geq C_0 r^{\frac{r}{(r-1)(k-1)}} n^{2-\frac{r}{(r-1)(k-1)}}>C_0r^{\frac{1}{k-1}}n^{2-\frac{r}{(r-1)(k-1)}}$.
\end{proof}

 Now let us prove the upper bound in Theorem \ref{thm:k-uni_r-reg}. A key ingredient in our proof is the recent breakthrough on the celebrated Sunflower conjecture \cite{ER60}. Given a positive integer $r$, a collection of $r$ sets is an \emph{$r$-sunflower} if there exists a (possibly empty) set $X$ (called the \emph{core}) such that the intersection of any two distinct elements in the collection is  $X$. It was recently proved by Alweiss, Lovett, Wu, and Zhang \cite{ALWZ20} that every $t$-uniform family of size at least $(\log t)^t\cdot (r\log\log t)^{O(t)}$ contains an $r$-sunflower, which was subsequently improved by Frankston, Kahn, Narayanan, and Park \cite{FKNP21} and Rao \cite{Rao21}. We use the following version, proved by Rao \cite{Rao21}.

\begin{lemma}\label{lemma:sunflower}
There is a constant $\alpha>1$ such that any family with  more than $(\alpha r\log (tr))^t$ sets of size $t$ contains an $r$-sunflower. 
\end{lemma}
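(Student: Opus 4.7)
The plan is to follow the \emph{spread family} strategy of Alweiss, Lovett, Wu and Zhang, later refined by Rao. Call a family $\mathcal{F}$ of $t$-element sets $w$-\emph{spread} if for every non-empty set $S$, $|\{F \in \mathcal{F}: S \subseteq F\}| \le w^{-|S|}|\mathcal{F}|$. The argument then splits into two parts: a reduction step that either finds the sunflower ``on the way'' or passes to a spread subfamily on a smaller ground set, and a probabilistic step that extracts an $r$-sunflower with empty core from any sufficiently spread family.

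For the reduction step I would fix $w = Cr\log(tr)$ for a suitable absolute constant $C$ (chosen so that the probabilistic step goes through) and proceed greedily. While there exists a non-empty $S$ with $|\{F \in \mathcal{F}: S \subseteq F\}| > w^{-|S|}|\mathcal{F}|$, I replace $\mathcal{F}$ by the link $\mathcal{F}_S = \{F \setminus S : F \in \mathcal{F},\, S \subseteq F\}$ and append $S$ to an accumulated core $S_0$. Starting from $|\mathcal{F}| > w^t$, this link satisfies $|\mathcal{F}_S| > w^{t-|S|}$, so after termination I have a $w$-spread family of $(t-|S_0|)$-sets on a ground set disjoint from $S_0$ with more than $w^{t-|S_0|}$ members. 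An $r$-sunflower with empty core in this final family, together with the core $S_0$, yields the desired $r$-sunflower of the original $\mathcal{F}$.

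For the probabilistic step, let $\mathcal{F}$ be $w$-spread on a ground set $V$, with $w = Cr\log(tr)$. I would take a uniformly random partition $V = V_1 \cup \dots \cup V_r$ and argue that with positive probability every $V_i$ contains some $F_i \in \mathcal{F}$; the resulting $F_1,\dots,F_r$ are automatically pairwise disjoint. The key input is the ``spread implies cover'' estimate of Alweiss--Lovett--Wu--Zhang (and Rao): for a uniformly random $W \subseteq V$ of density $1/r$,
\[
\Pr\bigl[\,\exists F \in \mathcal{F} \text{ with } F \subseteq W\,\bigr] \;\ge\; 1 - \tfrac{1}{2r},
\]
provided $w$ exceeds a sufficiently large multiple of $r\log(tr)$. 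A union bound over $i = 1, \ldots, r$ then gives the required $r$ disjoint sets.

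The main obstacle is the ``spread implies cover'' estimate just quoted: it is the technical heart of the ALWZ breakthrough to show that a random slice of density only $1/r$ covers a $w$-spread family of $t$-sets when $w$ is merely polylogarithmic in $t$. The cleanest proof I know (following Rao) is via a compression/encoding argument --- conditional on no $F \in \mathcal{F}$ being contained in the random subset $W$, one exhibits an encoding of $W$ using strictly fewer bits than $\log_2\binom{|V|}{|W|}$, contradicting the uniform distribution on $W$. The rest of the argument is routine bookkeeping on greedy link extraction and a union bound.
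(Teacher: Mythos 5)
The paper does not give a proof of this lemma: it is a recent theorem of Rao \cite{Rao21} (a refinement of the Alweiss--Lovett--Wu--Zhang sunflower bound), and the authors simply cite it as a black box. Your sketch is a faithful high-level outline of Rao's argument --- greedily linking on a heavy core to reduce to a $w$-spread family, then using the ``spread implies cover'' lemma together with a random $r$-partition and a union bound to extract $r$ pairwise disjoint members --- and you correctly identify the cover lemma as the technical heart. As a proof, however, your write-up is incomplete in exactly that place: the estimate that a random subset of density $1/r$ contains a member of a $w$-spread family with probability $1 - \tfrac{1}{2r}$ (for $w$ a large multiple of $r\log(tr)$) is stated without argument, and it is precisely this compression/encoding step that carries the improvement from $t^{O(t)}$ to $(\log t)^{O(t)}$ in the sunflower bound. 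Since the paper itself treats Lemma~\ref{lemma:sunflower} as an imported result, your proposal is consistent with the paper's treatment, but if the goal were a self-contained proof you would need to supply the encoding argument (or at minimum cite it at that point rather than at the outset). One small technical point worth tightening in a full write-up: after the greedy linking you obtain a spread family of $(t - |S_0|)$-sets, and you should observe that $|S_0| < t$ is forced (if $|S_0| + |S| = t$ the putative link would be $\{\emptyset\}$, of size $1 \le w^{-|S|}|\mathcal{F}_{S_0}|$, so the linking condition fails), so the final family is non-trivial and the cover lemma applies.
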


In what follows, we first consider the special case when $G$ is $\mu$-balanced (see~(\ref{def1})). 

\begin{lemma}\label{lemma:num_matching}
Let $G$ be a $\mu$-balanced, $k$-partite linear  $k$-graph on $n$ vertices with vertex classes $X_1,\dots,X_k$ such that $|X_1|\leq\dots \leq |X_k|$. Let $t\leq \frac{|X_1|}{2k\mu}+1$ be a positive integer. Then $G$ contains at least $(\frac{e(G)}{2t})^t$ matchings of size $t$. 
\end{lemma}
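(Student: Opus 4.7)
The plan is to count matchings greedily, edge by edge, and then divide by $t!$ to pass from ordered sequences to unordered matchings.

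I would start by fixing an ordered sequence $e_1,\dots,e_\ell$ of pairwise disjoint edges with $\ell \leq t-1$, and estimate the number of valid choices for the next edge $e_{\ell+1}$. Since $G$ is $k$-partite and $e_1,\dots,e_\ell$ form a matching, exactly $\ell$ vertices in each part $X_i$ are ``blocked''. An edge of $G$ is forbidden (fails to extend the matching) if and only if it contains some blocked vertex. By the $\mu$-balanced condition, each blocked vertex $w\in X_i$ lies in at most $\mu e(G)/|X_i|$ edges, so by the union bound the number of forbidden edges is at most
\[
\sum_{i=1}^{k}\ell\cdot \frac{\mu e(G)}{|X_i|}\leq \frac{k\ell\mu e(G)}{|X_1|}.
\]
Using $\ell\leq t-1\leq \frac{|X_1|}{2k\mu}$, this is at most $e(G)/2$, so there are at least $e(G)/2$ valid choices for $e_{\ell+1}$.

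Multiplying these lower bounds over $\ell=0,1,\dots,t-1$, the number of ordered sequences of $t$ pairwise disjoint edges is at least $\bigl(e(G)/2\bigr)^t$. Each unordered matching of size $t$ gives rise to exactly $t!$ such sequences, so the number of matchings of size $t$ is at least
\[
\frac{1}{t!}\left(\frac{e(G)}{2}\right)^{t}\geq \left(\frac{e(G)}{2t}\right)^{t},
\]
using $t!\leq t^t$. This establishes the claimed bound.

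The argument is essentially routine; the only point needing care is ensuring the upper bound $k\ell\mu e(G)/|X_1|$ on the forbidden edges (which uses the fact that $|X_1|$ is the smallest part size), and checking that the hypothesis $t\leq \tfrac{|X_1|}{2k\mu}+1$ is exactly strong enough to keep this below $e(G)/2$ at every step. There is no real obstacle beyond this bookkeeping.
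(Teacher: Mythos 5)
Your proof is correct and follows essentially the same greedy argument as the paper: bound the number of forbidden edges at each step by $k\ell\mu e(G)/|X_1|$ using the $\mu$-balanced condition and the fact that $|X_1|$ is the smallest part, conclude there are at least $e(G)/2$ choices for each new edge, and divide the resulting count of ordered sequences by $t!\leq t^t$. The only cosmetic difference is that you phrase the bound in terms of blocked vertices while the paper phrases it as edges removed from a shrinking subhypergraph $G_i$; these are the same computation.
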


\begin{proof}

We generate a matching of size $t$ with the following greedy procedure. Let $G_1=G$. In step $i$ ($1\leq i\leq t$), if $G_i$ is already defined and $G_i$ is nonempty, select an arbitrary edge $e_i$ of $G_i$, and let $G_{i+1}$ be the hypergraph we get after removing $e_i$ and all edges intersecting $e_i$ from $G_{i}$. Clearly, $\{e_1,\dots,e_{i}\}$ is a matching. Note that, since $\max_{w\in X_i} d_G(w)\leq \frac{\mu e(G)}{|X_i|}$, every edge of $G$ intersects at most $\sum_{i=1}^{k}\frac{\mu e(G)}{|X_i|}\leq \frac{k\mu e(G)}{|X_1|}$ edges, so $$e(G_i)\geq e(G)-\frac{(i-1)\cdot k \cdot \mu \cdot e(G)}{|X_1|}\geq e(G)-\frac{(t-1)\cdot k \cdot \mu \cdot e(G)}{|X_1|}\geq \frac{e(G)}{2}.$$

Hence, at step $i$, there are at least $\frac{1}{2}e(G)$ choices for the edge $e_i$, so in total there are at least $(\frac{1}{2}e(G))^t$ sequences $e_1,\dots,e_t$ such that $\{e_1,\dots,e_{t}\}$ is a matching. But then there are at least $\frac{1}{t!}(\frac{1}{2}e(G))^t\geq (\frac{e(G)}{2t})^{t}$ matchings in $G$, as each matching corresponds to at most $t!$ such sequences.
\end{proof}

\begin{lemma}\label{lemma:r-reg_balanced}
 There exists a constant $c_1=c_1(k)>0$ such that the following holds. Let $G$ be a $\mu$-balanced, $k$-partite linear $k$-graph on $n$ vertices such that $$e(G)>c_1r^{\frac{1}{k-1}}\mu n^{2-\frac{1}{k-1}}(\log n)^{\frac{1}{k-1}}.$$ Then $G$ contains an $r$-regular subhypergraph.
\end{lemma}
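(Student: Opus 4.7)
The plan is to combine the matching count from Lemma~\ref{lemma:num_matching} with the sunflower lemma (Lemma~\ref{lemma:sunflower}), using a pigeonhole argument on vertex sets of matchings. The key structural observation is that a sunflower of perfect matchings on a \emph{common} vertex set automatically yields an $r$-regular subhypergraph from its petals.

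First, apply Lemma~\ref{lemma:num_matching} with a carefully chosen $t$ (close to the maximum permitted value $\lfloor |X_1|/(2k\mu)\rfloor+1$) to obtain at least $N=(e(G)/(2t))^t$ matchings of size $t$ in $G$. Each such matching $M$ has a vertex set $V(M)\subseteq V(G)$ consisting of exactly $t$ vertices in each of the $k$ parts. By AM-GM, $\prod_{j=1}^{k}|X_j|\leq (n/k)^k$, so the total number of possible vertex sets is at most $\prod_j \binom{|X_j|}{t}\leq (en/(kt))^{kt}$. By pigeonhole, some vertex set $V$ is shared by at least
$$N'\geq \frac{(e(G)/(2t))^t}{(en/(kt))^{kt}}$$
of these matchings. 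Each such matching is a perfect matching of $V$ in the induced subhypergraph $G[V]$, and may be viewed as a $t$-element subset of $E(G[V])$.

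Second, apply Lemma~\ref{lemma:sunflower} to this family of $t$-sets. Provided $t$ is chosen so that $N'>(\alpha r \log(tr))^t$, we extract an $r$-sunflower $M_1,\dots,M_r$ of perfect matchings on $V$, with common core $C$ and pairwise edge-disjoint petals $P_i=M_i\setminus C$. Since each $M_i$ is a perfect matching on the \emph{same} set $V$ and $C\subseteq M_i$, each petal $P_i$ is a perfect matching of $V\setminus V(C)$. Because the $M_i$ are distinct, $|C|<t$, so $V\setminus V(C)$ is non-empty. The pairwise edge-disjointness of the petals then implies that $\bigcup_{i=1}^r P_i$ is the desired non-empty $r$-regular subhypergraph on $V\setminus V(C)$: every vertex of this set lies in exactly one edge of each $P_i$, and these $r$ edges are distinct.

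The main obstacle is the parameter balancing: $t$ must be small enough that Lemma~\ref{lemma:num_matching} applies, yet large enough that the matching count $N$ dominates both the pigeonhole loss $(en/(kt))^{kt}$ over vertex sets and the sunflower threshold $(\alpha r\log(tr))^t$. A crucial input here is the linear-hypergraph bound $|X_1|\geq (k-1)e(G)/n$, valid because each link $L(y)$ is a matching on at most $n/(k-1)$ vertices in $X_2\cup\cdots\cup X_k$; this guarantees that the maximum admissible $t$ scales appropriately with $e(G)$, so that the derived lower bound on $e(G)$ can be made to match the hypothesis $e(G)>c_1 r^{1/(k-1)}\mu n^{2-1/(k-1)}(\log n)^{1/(k-1)}$ for a suitable constant $c_1=c_1(k)$.
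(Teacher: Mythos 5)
Your overall plan is the same as the paper's: count size-$t$ matchings via Lemma~\ref{lemma:num_matching}, pigeonhole on the spanned vertex set $U$, extract an $r$-sunflower of matchings on $U$ via Lemma~\ref{lemma:sunflower}, and observe that the petals form an $r$-regular subhypergraph. The structural observation about petals is correct, and your slightly sharper bound $|X_1|\geq(k-1)e(G)/n$ (vs.\ the paper's $|X_1|\geq e(G)/n$) is also fine.

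However, there is a genuine gap in the pigeonhole step, and it is fatal to the claimed exponent. You bound the number of candidate vertex sets by $\prod_j\binom{|X_j|}{t}\leq (en/(kt))^{kt}$ via AM--GM. This bound erases the asymmetry between the parts: in the relevant regime $|X_1|$ is much smaller than $n/k$ (indeed $|X_1|\approx e(G)/n\ll n/k$ near the threshold), and AM--GM overcounts the vertex sets by roughly a factor $\bigl(n/(k|X_1|)\bigr)^t$. The paper instead uses $\prod_j\binom{|X_j|}{t}\leq\binom{|X_1|}{t}\binom{n}{t}^{k-1}$, keeping $|X_1|$ explicit in exactly the place where it gives a gain. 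Tracking the arithmetic through your version, with $t\geq|X_1|/(2k\mu)$ and $|X_1|\geq(k-1)e(G)/n$, the per-step quantity you need to exceed the sunflower threshold is of order $e(G)^k/(\mu^{k-1}n^{2k-1})$, which translates to a requirement $e(G)\gtrsim\mu^{1-1/k}r^{1/k}(\log n)^{1/k}n^{2-1/k}$. Since $2-1/k>2-1/(k-1)$, this is strictly stronger than the hypothesis of the lemma: if $e(G)=\Theta\bigl(r^{1/(k-1)}\mu n^{2-1/(k-1)}(\log n)^{1/(k-1)}\bigr)$, your quantity $e(G)^k/(\mu^{k-1}n^{2k-1})$ is $\Theta\bigl(r^{k/(k-1)}\mu(\log n)^{k/(k-1)}n^{-1/(k-1)}\bigr)\to 0$, so the sunflower condition fails. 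By contrast, the paper's quantity $e(G)^{k-1}/(\mu^{k-1}n^{2k-3})$ is $\Theta(r\log n)$ at the threshold, which is exactly what is needed.

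So you identified $|X_1|\geq(k-1)e(G)/n$ as a crucial input but used it only to lower-bound $t$; it must also be fed into the pigeonhole count, which requires replacing the AM--GM bound by $\binom{|X_1|}{t}\binom{n}{t}^{k-1}$ (or any bound that retains the $|X_1|$-dependence). With that change, the parameter balancing closes and the stated constant $c_1(k)$ can be extracted.
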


\begin{proof}
We show that $c_1=8k\alpha^{\frac{1}{k-1}}$ suffices, where $\alpha$ is the constant guaranteed by Lemma~\ref{lemma:sunflower}. Let $X_1,\dots,X_k$ be the vertex classes of $G$, where $|X_1|\leq\dots\leq |X_k|$. Let $t=\left\lceil \frac{|X_1|}{2k\mu} \right\rceil$. By Lemma~\ref{lemma:num_matching}, $G$ contains at least $N=(\frac{e(G)}{2t})^t$ matchings of size $t$. Each such matching covers exactly $t$ vertices in $X_i$ for every $i\in [t]$, so by the Pigeonhole principle, there exists a set $U\subset V(G)$ of size $tk$ such that at least 
$$\frac{N}{\prod_{i=1}^{k}\binom{|X_i|}{t}}\geq \frac{N}{\binom{|X_1|}{t}\binom{n}{t}^{k-1}}$$
matchings of size $t$ span $U$. Here, using the general inequality $\binom{a}{b}\leq (ea/b)^{b}$, we can further bound the right hand side from below by
$$N\left(\frac{t^k}{e^k|X_1| n^{k-1}}\right)^t=\left(\frac{e(G) t^{k-1}}{2 e^k |X_1|n^{k-1}}\right)^t\geq \left(\frac{e(G) |X_1|^{k-2}}{(2e)^k(k\mu)^{k-1} n^{k-1}}\right)^{t}.$$
Note that $|X_1|\geq e(G)/n$, as the maximum degree of $G$ is at most $n$. Hence, by choosing a sufficiently large constant $c_1$, we can  bound the right hand side from below by
$$\left(\frac{e(G)^{k-1}}{(2e)^{k}(k\mu)^{k-1} n^{2k-3}}\right)^t>(2\alpha r\log n)^{t}.$$
Let $\mathcal{F}$ be the family of sets whose ground set consists of all the edges of $G$ in $U$ and whose sets are the matchings spanning $U$. As $n^2\geq tr$, Lemma \ref{lemma:sunflower} guarantees that $\mathcal{F}$ contains an $r$-sunflower $S$. This is a collection of $r$
matchings spanning $U$, such that every edge either belongs to all the matchings or only to one of them. Remove the edges contained in the core of $S$, let the resulting set be $S'$. Then the edges in $S'$ form an $r$-regular subhypergraph of $G$, finishing the proof.
\end{proof}

Now we are ready to prove the upper bound of Theorem \ref{thm:k-uni_r-reg}. In fact, we prove the following more precise formulation.

\begin{theorem}
For every positive integer $k\geq 2$, there exists a constant $c=c(k)>0$ such that the following holds. Let $r$ be a positive integer, and let $G$ be a linear $k$-graph on $n$ vertices with at least $$cr^{\frac{1}{k-1}}n^{2-\frac{1}{k-1}}(\log n)^{2k+\frac{1}{k-1}}$$ edges. Then $G$ contains an $r$-regular subhypergraph.
\end{theorem}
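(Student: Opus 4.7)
The plan is to combine the two preceding lemmas in a direct way, using Lemma \ref{lem:DHL regularization} to reduce the general case to the $\mu$-balanced $k$-partite case, and then applying Lemma \ref{lemma:r-reg_balanced} to conclude. The bound in the theorem is essentially calibrated so that the losses incurred in the regularization step exactly match the $\mu = 2\lambda^k$ factor appearing in Lemma \ref{lemma:r-reg_balanced}.

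Concretely, set $\lambda = \lceil \log n \rceil$ and apply Lemma \ref{lem:DHL regularization} to $G$ to extract a $2\lambda^k$-balanced $k$-partite subhypergraph $G'$ with at least $e(G) \cdot k!/(k\lambda)^k$ edges. Let $n' = |V(G')| \leq n$ and $\mu = 2\lambda^k$. I would then verify the hypothesis of Lemma \ref{lemma:r-reg_balanced} for $G'$, namely
\[
e(G') > c_1 r^{\frac{1}{k-1}} \mu \, (n')^{2-\frac{1}{k-1}} (\log n')^{\frac{1}{k-1}}.
\]
Using $e(G') \geq e(G) k!/(k\lambda)^k$, $\mu = 2\lambda^k \leq 2(\log n + 1)^k$, $n' \leq n$, and $\log n' \leq \log n$, it suffices to have
\[
e(G) \geq \frac{(k\lambda)^k}{k!} \cdot c_1 r^{\frac{1}{k-1}} \cdot 2\lambda^k \cdot n^{2-\frac{1}{k-1}} (\log n)^{\frac{1}{k-1}},
\]
which, absorbing all $k$-dependent constants into $c=c(k)$, is implied by
\[
e(G) \geq c \, r^{\frac{1}{k-1}} n^{2-\frac{1}{k-1}} (\log n)^{2k + \frac{1}{k-1}}.
\]
This is precisely the lower bound assumed in the theorem. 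Therefore Lemma \ref{lemma:r-reg_balanced} applies to $G'$, yielding an $r$-regular subhypergraph of $G'$, hence of $G$.

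There is no real obstacle here beyond bookkeeping: the exponent $2k$ on $\log n$ in the theorem statement is the sum of the exponent $k$ coming from the $(k\lambda)^k$ denominator in Lemma \ref{lem:DHL regularization} and the exponent $k$ coming from $\mu = 2\lambda^k$ in Lemma \ref{lemma:r-reg_balanced}, while the extra $1/(k-1)$ is inherited directly from Lemma \ref{lemma:r-reg_balanced}. All real work was done in the previous two lemmas; this theorem is their composition.
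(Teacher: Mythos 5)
Your proof is correct and follows essentially the same route as the paper: apply Lemma \ref{lem:DHL regularization} with $\lambda = \lceil \log n \rceil$ to pass to a $2\lambda^k$-balanced $k$-partite subhypergraph, then check that the retained number of edges satisfies the hypothesis of Lemma \ref{lemma:r-reg_balanced}. Your bookkeeping, including the observation that $n' \leq n$ only makes the required inequality easier, matches the paper's computation (where the paper chooses $c = 10^k c_1 k^k/k!$).
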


\begin{proof}
We show that $c=10^{k}c_1k^{k}/k!$ suffices, where $c_1$ is the constant given by Lemma \ref{lemma:r-reg_balanced}.  Let $\lambda=\lceil \log n\rceil<2\log n$. By Lemma \ref{lem:DHL regularization}, $G$ contains a $2\lambda^{k}$-balanced subhypergraph $H$ with at least $e(G)k!/(k\lambda)^{k}$ edges. Then, writing $\mu=2\lambda^{k}$, we have $$e(H)\geq c_1r^{\frac{1}{k-1}}\mu n^{2-\frac{1}{k-1}}(\log n)^{\frac{1}{k-1}},$$ so we can apply Lemma \ref{lemma:r-reg_balanced} to conclude that $H$ contains an $r$-regular subhypergraph.
\end{proof}

\section{Hypergraph immersion} \label{sect:immersion}

In this short section, we prove Theorem \ref{thm:immersion}.

\begin{proof}[Proof of Theorem \ref{thm:immersion}]
Let $C$ be the constant defined by Theorem \ref{thm:2-reg}, and let $G$ be a 3-graph with at least $n^2\exp(3C\sqrt{(\log n)(\log\log n)})$ edges. We show that $G$ contains a 0-immersion of a surface.

Define the auxiliary 3-graph $H$ as follows. Let the vertex set of $H$ be the set of unordered pairs of vertices $uv$ with $u,v\in V(G)$, and for every $uvw\in E(G)$, let the set $\{uv,vw,uw\}$ be an edge of $H$. Then $H$ is a linear 3-graph with $N=\binom{n}{2}$ vertices and $e(G)\geq N\exp(C\sqrt{(\log N)(\log\log N)})$ edges. Therefore, $H$ contains a 2-regular subhypergraph $H'$. This 3-graph $H'$ corresponds to a subhypergraph $G'$ of $G$ in which any pair of vertices is contained in exactly 2 or 0 edges. In other words, the link graph of any vertex in $G$ is a 2-regular graph, that is, a vertex-disjoint union of cycles.

Define the sequence of 3-graphs $S_0,S_1,\dots$ as follows. Let $S_0$ be the 3-graph we get after removing the isolated vertices from $G'$. Suppose that $S_i$ has a vertex $v\in V(S_i)$ whose link graph is the union of $k$ cycles $C_1,\dots,C_k$, where $k\geq 2$. Remove $v$ and add $k$ new vertices $v_1,\dots,v_k$, called the \emph{clones of $v$} such that the link graph of $v_i$ is $C_i$. Let the resulting 3-graph be $S_{i+1}$.  If there is no such vertex $v$, stop, and set $S=S_i$. 

Observe that $S_{i+1}$ has the same number of $1$- and $2$-faces as $S_i$. Furthermore, the link graph of any vertex other than the clones of $v$ remains the same (up to isomorphism). This implies that a clone will not get cloned again, so the sequence ends in a finite number of steps. In the end, the link graph of every vertex of $S$  is a cycle, therefore $S$ is homeomorphic to the disjoint union of closed surfaces. Furthermore there is a homomorphism $\phi:S\rightarrow S_0$ which is injective on the $1$- and $2$-faces. Indeed, let $\phi$ be the homomorphism induced by the map $\phi_0:V(S)\rightarrow V(S_0)$ that maps every clone to its original, and if a vertex was not cloned, then to itself. Hence, $G$ contains a 0-immersion of a surface, finishing the proof.
\end{proof}

\section{Concluding remarks}

In this paper we proved an asymptotically tight upper bound for the maximum possible number of edges in a $3$-uniform linear hypergraph without a $2$-regular subhypergraph. We also improved the best known bounds for the more general problem of maximizing the number of edges in $k$-uniform linear hypergraphs without an $r$-regular subhypergraph (see Theorem \ref{thm:k-uni_r-reg}). We expect that when $r\geq 3$, proving that the lower bound is tight would be difficult, even in the $3$-uniform case. The problem is closely related to the following question, recently considered by the authors \cite{JST22}. Given an $n$-vertex graph $G$ with average degree $d$ and a positive integer $r$, what is the largest possible order of the smallest $r$-regular subgraph of $G$? The authors conjectured that for some $C=C(r)$, if $C\log \log n\leq d\leq n^{\frac{r-2}{r}}$, then $G$ contains an $r$-regular subgraph on roughly at most $nd^{-\frac{r}{r-2}}$ vertices, which would be tight by a simple random construction. Note that for $k=3$, the lower bound in our Theorem \ref{thm:k-uni_r-reg} is $f_r^{(3)}(n)\gtrsim n^{2-\frac{r}{2r-2}}$. We claim that the tightness of this bound would imply the above conjecture of the authors about small $r$-regular subgraphs in graphs, for $d\approx n^{\frac{r-2}{2r-2}}$. Indeed, assuming that the conjecture fails to hold for this value of $d$, there is an $n$-vertex graph $G$ with average degree $d$ in which every $r$-regular subgraph has at least $\omega(nd^{-\frac{r}{r-2}})=\omega(n^{\frac{r-2}{2r-2}})$ vertices. Since $G$ has $n$ vertices and $\Theta(n^{1+\frac{r-2}{2r-2}})$ edges, it follows by a classical regularization lemma of Erd\H os and Simonovits~\cite{ES70} that $G$ has a subgraph $G'$ with $m$ vertices, $\Theta(m^{1+\frac{r-2}{2r-2}})$ edges and maximum degree $O(m^{\frac{r-2}{2r-2}})$ for some $m=\omega(1)$. By the maximum degree condition, $G'$ has a proper edge-colouring with $O(m^{\frac{r-2}{2r-2}})$ colours. By defining a new vertex $u_c$ for each colour $c$ and taking a 3-edge $u_c vw$ for every edge $vw$ of colour $c$ in $G'$, we obtain a linear $3$-uniform hypergraph $\cG$ with roughly $m$ vertices and $\Theta(m^{1+\frac{r-2}{2r-2}})=\Theta(m^{2-\frac{r}{2r-2}})$ edges. By the assumption that $f_r^{(3)}(m)\approx m^{2-\frac{r}{2r-2}}$, this implies that $\cG$ has an $r$-regular subhypergraph. This corresponds to an $r$-regular subgraph in $G'$ in which every colour is used zero times or $r$ times. By assumption, this subgraph has $\omega(n^{\frac{r-2}{2r-2}})$ vertices (since it is an $r$-regular subgraph of $G$). It follows easily that the number of colours used on this subgraph must be $\omega(n^{\frac{r-2}{2r-2}})$, but this is a contradiction as we have only $O(m^{\frac{r-2}{2r-2}})$ colours and $m\leq n$.

\bibliographystyle{abbrv}
\bibliography{bibliography}

\end{document}